    \theoremstyle{plain}
    \newtheorem{theorem}{Theorem}
    \newtheorem{corollary}[theorem]{Corollary}
    \newtheorem{lemma}[theorem]{Lemma}
    \newtheorem{proposition}[theorem]{Proposition}
    \theoremstyle{definition}
    \newtheorem{definition}[theorem]{Definition}
    \newtheorem{fact}[theorem]{Fact}
    \newtheorem{example}[theorem]{Example}
    \newcommand{\C}{\mathcal{C}}
    \newcommand{\Avns}{\Av_n^*(123)}
    \newcommand{\Avn}{\Av_n(123)}
    \newcommand{\Avnn}{\Av_n (132)}
    \newcommand{\D}{\mathcal{D}}
    \newcommand{\ra}{\rightarrow}
    \newcommand{\R}{\mathcal{R}}
    \newcommand{\B}{\mathcal{B}}
    \DeclareMathOperator{\Av}{Av}
    \DeclareMathOperator{\num}{f}
    \DeclareMathOperator{\Sp}{Span}
\begin{document} 

  \title{Expected Patterns in Permutation Classes}
  \author{Cheyne Homberger}
  \address{
    Cheyne Homberger, Department of Mathematics, 
    University of Florida,
    358 Little Hall,  
    Gainesville, FL 32611--8105 (USA)
  }
  \date{\today}

  \begin{abstract} 
    In the set of all patterns in $S_n$, it is clear that each
    k-pattern occurs equally often. If we instead restrict to the
    class of permutations avoiding a specific pattern, the situation
    quickly becomes more interesting. Mikl\'os B\'ona recently proved
    that, surprisingly, if we consider the class of permutations
    avoiding the pattern 132, all other non-monotone patterns of
    length 3 are equally common. In this paper we examine the class
    $\Av (123)$ of permutations avoiding $123$, and give exact
    formula for the occurrences of each length 3 pattern. While this
    class does not break down as nicely as $\Av (132)$, we find some
    interesting similarities between the two and prove that the number
    of 231 patterns is the same in each.
  \end{abstract}

  \maketitle

\section{Background}

  Let $p = p_1 p_2 \ldots p_n$ be a permutation in the symmetric group
  $S_n$ written in one-line notation. Given a permutation $q \in
  S_k$, say that $p$ contains $q$ as a pattern (denoted $q \prec p$)
  if there exist $k$ indices $ 1 \leq i_1 \leq i_2 \leq \ldots \leq
  i_k \leq n$ such that the entries $p_{i_1} p_{i_2} \ldots p_{i_k}$
  are in the same relative order as the entries of $q$ ($q_j < q_k$
  if and only if $p_{i_j} < p_{i_k}$).  If $p$ does not contain $q$ as
  a pattern, we say that $p$ \emph{avoids} $q$. 

  The set of all permutations equipped with this ordering can be
  viewed as a partially ordered set which is graded with respect to
  permutations length.  With this in mind, we define a
  \emph{permutation class} to be a downset (or ideal) of this poset.
  That is, a class is a collection of permutations $\C$ for which, if
  $p \in \C$ and $q \prec p$, then $q \in \C$. 

  Given a pattern $q$, the set $\Av (q)$ of all permutations avoiding $q$
  forms a natural permutation class, and much study has been devoted
  to understanding and enumerating classes of this form.  An early
  result in the area from Knuth \cite{knuth3}, is that the number of
  $n$-permutations avoiding the pattern $231$ is equal to the Catalan
  number $c_n = \frac{1}{n+1} \binom{2n}{n}$, and these are exactly
  the stack sortable permutations. A more comprehensive introduction
  to permutation patterns can be found in \cite{bonabook}.

  A question of Joshua Cooper and a recent result of Mik\'os B\'ona have
  opened up a new line of research: in a given permutation class, what
  can be said about the average number of occurences of each pattern?
  or equivalently: what is the total number of occurences of each
  pattern in the class?
  It is simple to show that in the class of all permutations, all
  patterns of a given length are equally common. The situation becomes
  much more interesting as we restrict our attention to smaller
  classes.

\section{Preliminaries}

  \begin{definition}
    Let $p,q$ be permutations. Denote by $\num_{q}(p)$ the number of
    occurences of $q$ in $p$ as a pattern.  
  \end{definition}

  For example, $\num_{213}(462513) = 2$ since the first third and
  fourth entries as well as the third fifth and sixth entries form
  $213$ patterns. Also, for any permutation $p$, $\num_{21}(p)$ counts
  the number of inversions of $p$. Note that every permutation
  statistic can be expressed through combinations of counts of
  permutation patterns, as described in \cite{claesson11}.
  
  We'll be concerned primarily with the total number of patterns in a
  class of permutations. For simplicity, we use similar notation. 

  \begin{definition}
    Let $\mathcal{C}$ denote a permutation class, and $q$ a
    pattern. Define $\num_{q}(\mathcal{C})= \sum_{p \in
    \mathcal{C}} \num_q(p)$.
    We will omit the $\mathcal{C}$ when the class in question is
    unambiguous.
  \end{definition}

  \begin{example} \label{linearity}  
    Let $q \in S_k$. Then it follows by linearity of expectation that
    $$\num_q(S_n) = \frac{n!}{k!} \binom{n}{k}.$$
  \end{example}
  
  The Catalan numbers will appear frequently in our enumeration, and
  so it will be useful to establish some standard notation and a few
  simple identities.

  \begin{definition}
    Let $c_n = \frac{1}{n+1} \binom{2n}{n}$ denote the $n$th Catalan
    number. Also, let 
    $$ C(x) = \sum_{n\geq 0} c_n x^n = \frac{1 - \sqrt{1-4x}}{2x}.$$
  \end{definition}

  \begin{fact}
    The following identities follow directly from the recurrence $C(x) =
    xC(x)^2 +1$.  
    $$ C(x)^2 = \frac{C(x)}{1-xC(x)} = \frac{1}{(1-xC(x))^2} 
    \text{ \  and \  } \frac{C(x)-1}{C(x)} = xC(x).$$
  \end{fact}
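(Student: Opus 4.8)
The plan is to derive all three identities from the single relation $C(x) = xC(x)^2 + 1$, working in the ring $\mathbb{Z}[[x]]$ of formal power series. Since $C(0) = c_0 = 1$, both $C(x)$ and $1 - xC(x)$ have constant term $1$ and are therefore units in $\mathbb{Z}[[x]]$, so every division performed below is a legitimate formal-power-series operation, not merely an identity of analytic functions near the origin. (Alternatively one could substitute the closed form $C(x) = \frac{1-\sqrt{1-4x}}{2x}$ and simplify, but the recurrence route is cleaner and avoids square roots.)

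First I would dispatch the rightmost identity. Subtracting $1$ from both sides of the recurrence gives $C(x) - 1 = xC(x)^2$, and dividing through by the unit $C(x)$ yields $\frac{C(x)-1}{C(x)} = xC(x)$, as claimed.

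Next I would rearrange $C(x) - 1 = xC(x)^2$ into $C(x) - xC(x)^2 = 1$, that is, $C(x)\bigl(1 - xC(x)\bigr) = 1$. Dividing by the unit $1 - xC(x)$ produces the key relation $C(x) = \frac{1}{1-xC(x)}$. Multiplying both sides of this by $C(x)$ gives $C(x)^2 = \frac{C(x)}{1-xC(x)}$, while squaring it gives $C(x)^2 = \frac{1}{(1-xC(x))^2}$; together these furnish the full chain of equalities in the statement.

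There is essentially no obstacle here: the only point meriting a word of care is the justification that the divisions are valid in $\mathbb{Z}[[x]]$, which is immediate from the constant-term observation above. The content of the Fact is exactly the assertion that it records, namely that nothing beyond the defining quadratic is needed.
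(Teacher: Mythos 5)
Your derivation is correct and is exactly the elementary algebra the paper has in mind (the paper states this Fact without further proof, asserting it follows directly from $C(x)=xC(x)^2+1$): rewriting the recurrence as $C(x)(1-xC(x))=1$ and as $C(x)-1=xC(x)^2$ gives all three identities, and your remark that $C(x)$ and $1-xC(x)$ are units in the formal power series ring properly justifies the divisions. Nothing further is needed.
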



  In \cite{bona10} and \cite{bona12}, Mikl\'os B\'ona studied the
  class $\Avnn$ and found some surprising symmetries. He also gave
  exact formula and generating functions for the expectation of all
  length three patterns. In this paper we give a similar
  classification of the class $\Avn$, with some equally surprising
  connections to $\Avnn$. 

  Because both $132$ and $123$ are involutions and $231^{-1} = 312$,
  we have the following identity. Further identites, however, require
  considerably more effort.

  \begin{fact}
    In both $\Avnn$ and $\Avn$, $\num_{231} = \num_{312}$. 
  \end{fact}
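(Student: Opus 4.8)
The plan is to use the inversion map $p \mapsto p^{-1}$, which is a bijection of $S_n$ onto itself, and to reduce everything to one elementary transfer lemma: for every $p \in S_n$ and every pattern $q \in S_k$ we have $\num_q(p) = \num_{q^{-1}}(p^{-1})$. To prove this I would argue as follows. An occurrence of $q$ in $p$ is a choice of indices $i_1 < i_2 < \cdots < i_k$ for which the values $p_{i_1}, p_{i_2}, \ldots, p_{i_k}$ are order-isomorphic to $q$. Passing to $p^{-1}$, consider instead the positions $p_{i_1}, \ldots, p_{i_k}$; listing these positions in increasing order, the corresponding entries of $p^{-1}$ are the indices $i_1, \ldots, i_k$ arranged according to the ranks of the $p_{i_j}$, which is to say according to $q^{-1}$. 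Since $i_1 < \cdots < i_k$, the pattern formed by these entries of $p^{-1}$ is exactly $q^{-1}$, and the construction is plainly a bijection between the occurrences of $q$ in $p$ and the occurrences of $q^{-1}$ in $p^{-1}$.

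Next I would record two consequences. First, taking $q = 132$ (respectively $q = 123$), which are their own inverses, the transfer lemma shows that $p$ avoids $132$ (resp.\ $123$) if and only if $p^{-1}$ does; hence $p \mapsto p^{-1}$ restricts to a bijection of $\Avnn$ onto itself, and likewise of $\Avn$ onto itself. Second, since $231^{-1} = 312$, the transfer lemma gives $\num_{231}(p) = \num_{312}(p^{-1})$ for every permutation $p$.

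Finally I would assemble these. Letting $\C$ denote either $\Avnn$ or $\Avn$,
$$\num_{231}(\C) = \sum_{p \in \C} \num_{231}(p) = \sum_{p \in \C} \num_{312}(p^{-1}) = \sum_{r \in \C} \num_{312}(r) = \num_{312}(\C),$$
where the middle equality is the transfer lemma and the penultimate equality reindexes the sum by $r = p^{-1}$, using that inversion permutes $\C$. The only point requiring any attention is the bookkeeping in the transfer lemma — keeping straight that it is $q^{-1}$, not $q$, that appears for $p^{-1}$, and checking that the selected index set transforms correctly under inversion — but this is entirely routine, and there is no genuine obstacle here.
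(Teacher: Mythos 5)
Your proposal is correct and follows essentially the same route as the paper, which likewise observes that $132$ and $123$ are involutions (so inversion maps each class onto itself) and that $231^{-1}=312$, so the inversion map transfers $231$ occurrences to $312$ occurrences. Your write-up simply makes the paper's one-line remark explicit by spelling out the transfer lemma $\num_q(p)=\num_{q^{-1}}(p^{-1})$ and the reindexing of the sum.
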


  For a given integer $k$, inversion provides a simple bijection from
  the set of permutations containing exactly $k$ $231$ patterns to the
  set containing exactly $k$ $312$ patterns, proving not only that the
  total number of each pattern is the same, but that these numbers are
  equidistributed across the sets $\Avnn$ and $\Avn$. B\'ona showed that
  equidistribution is not required for the total number of patterns to
  be equal.

  \begin{theorem}[B\'ona] \label{bonasthm}
    In $\Avnn $, the total numbers of $231$, $213$, and $312$
    patterns are equal, and their numbers, with respect to $n$, 
    are given by the generating function
    $$ \frac{x^2C(x)^3}{(1-2xC(x))(1-4x)^\frac32}.$$
    Furthermore, $321$ is the most common pattern and $123$ is the
    least common. 
  \end{theorem}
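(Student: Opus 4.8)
The plan is to exploit the standard recursive structure of $132$-avoiding permutations and turn it, pattern by pattern, into a linear functional equation for the counting series $F_q(x) := \sum_{n \ge 0}\num_q(\Avnn)\,x^n$. If $p\in\Avnn$ has its largest entry $n$ in position $i$, then $p = L\,n\,R$ with $|L| = i-1$ and $|R| = n-i$; since an entry of $L$ smaller than an entry of $R$ would form a $132$ together with $n$, every entry of $L$ must exceed every entry of $R$, so $L$ and $R$ are themselves $132$-avoiding permutations, on the top $i-1$ and the bottom $n-i$ values respectively. This is exactly the decomposition underlying $c_n = \sum_{i=1}^{n} c_{i-1}c_{n-i}$. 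Before treating the length-$3$ patterns I would first pin down the inversion series $I(x) := \sum_{n}\num_{21}(\Avnn)\,x^n$, since it feeds into every later computation: the decomposition gives $\num_{21}(L\,n\,R) = \num_{21}(L) + \num_{21}(R) + i(n-i)$, the cross term recording the inversions of $n$ against $R$ together with those of $L$ against $R$, so $I(x) = 2xC(x)I(x) + x^{3}C'(x)^{2} + x^{2}C(x)C'(x)$. Differentiating $C(x) = xC(x)^{2}+1$ gives $C'(x) = C(x)^{2}/(1-2xC(x))$, and together with $xC(x)^{2} = C(x)-1$ this solves to a closed form for $I(x)$.

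Next, for each length-$3$ pattern $q$ I would classify an occurrence of $q$ in $p = L\,n\,R$ by whether it uses the entry $n$ and how its remaining entries split between $L$ and $R$. Because $L$ lies entirely before $R$ and entirely above it, most of the conceivable splits cannot occur, and each surviving one contributes a quantity linear in $\num_{21}(L)$, $\num_{21}(R)$, $\num_{12}(L)$, $\num_{12}(R)$, $|L|$, and $|R|$. For instance,
\begin{align*}
\num_{213}(p) &= \num_{213}(L) + \num_{213}(R) + \num_{21}(L),\\
\num_{231}(p) &= \num_{231}(L) + \num_{231}(R) + (n-i)\bigl(|L| + \num_{12}(L)\bigr),\\
\num_{321}(p) &= \num_{321}(L) + \num_{321}(R) + (n-i)\,\num_{21}(L) + i\,\num_{21}(R),\\
\num_{123}(p) &= \num_{123}(L) + \num_{123}(R) + \num_{12}(L),
\end{align*}
while $\num_{312}(\Avnn) = \num_{231}(\Avnn)$ is the identity recorded immediately before the theorem. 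Substituting $\num_{12}(L) = \binom{|L|}{2} - \num_{21}(L)$ and summing each identity over $\Avnn$ turns it into an equation $F_q(x) = 2xC(x)F_q(x) + \Phi_q(x)$, with $\Phi_q$ an explicit combination of $C$, $C'$, and $I$; hence $F_q(x) = \Phi_q(x)/(1-2xC(x))$.

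It then remains to simplify. Using $1 - 2xC(x) = \sqrt{1-4x}$ and $1 - \sqrt{1-4x} = 2xC(x)$ (both immediate from $C(x) = \tfrac{1-\sqrt{1-4x}}{2x}$) together with the Catalan identities stated earlier, the three series $F_{231}$, $F_{213}$, $F_{312}$ collapse to one and the same closed form, which one checks is the generating function in the statement; this equality is the promised coincidence, and although it ultimately falls out of $C(x) = xC(x)^{2}+1$, no reason for it is apparent before the computation is finished. For the extremal assertion it suffices to compare coefficients: the same manipulations reduce $F_{321} - F_{231}$ and $F_{231} - F_{123}$ to series of the form $x^{a}C(x)^{b}(1-4x)^{-c/2}$, all of whose coefficients are non-negative, so $\num_{321}(\Avnn) \ge \num_{231}(\Avnn) = \num_{213}(\Avnn) = \num_{312}(\Avnn) \ge \num_{123}(\Avnn)$ for every $n$, the avoided pattern $132$ (whose count is identically $0$) being set aside.

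The main difficulty is not a single clever step but the bookkeeping. The delicate points are getting the case analysis exactly right for each pattern---it is easy to overlook one of the ways the three chosen entries can be distributed among $L$, $n$, and $R$, or to attach the wrong sub-statistic to a surviving case---then correctly assembling the auxiliary series $\Phi_q$ and $I$, and finally grinding the resulting blend of rational functions and $\sqrt{1-4x}$ down to a form in which the coincidence $\num_{231} = \num_{213} = \num_{312}$ and the two inequalities are transparent. Since that coincidence has no obvious a priori explanation, the case analysis has to be carried out with particular care.
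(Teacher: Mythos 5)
Your proposal is correct, and it takes a genuinely different route from the paper --- indeed the paper never proves Theorem \ref{bonasthm} at all: it imports it from B\'ona's work, and, as the paper itself remarks, B\'ona's argument is a bijection from pattern occurrences to pattern occurrences (with injections handling the extremal claims). Your route is instead a direct generating-function derivation from the $L\,n\,R$ decomposition of a $132$-avoider, and the bookkeeping you worry about does close up: your recursions for $\num_{21}$, $\num_{213}$, $\num_{231}$, $\num_{321}$, $\num_{123}$ are all correct, your functional equation for $I(x)$ is right, and with $1-2xC(x)=\sqrt{1-4x}$, $C'(x)=C(x)^2/\sqrt{1-4x}$ and $xC(x)^2=C(x)-1$ one finds $I-J = x^3C(x)^4/(1-4x)$ (where $J$ is the noninversion series), the needed identity $C(x)\,I(x) = xC'(x)\bigl(xC'(x)+J(x)\bigr)$ reduces exactly to $C(x)=xC(x)^2+1$, giving $F_{213}=F_{231}$ ($=F_{312}$ by inversion), and the differences come out manifestly nonnegative, namely $F_{321}-F_{213} = 2x^2C'(x)I(x)/\sqrt{1-4x}$ and $F_{213}-F_{123} = x^4C(x)^5/(1-4x)^{3/2}$, so the extremal statement follows as you claim. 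What your approach buys is a self-contained computation with explicit closed forms and immediate asymptotics; what it does not buy --- as you concede --- is any structural explanation of the coincidence, which is precisely what B\'ona's pattern-to-pattern bijection provides. One caveat so you do not doubt your own arithmetic: your method yields $F_{213}(x) = x^3C(x)^3/(1-4x)^2$, which has one more factor of $x$ than the displayed generating function $x^2C(x)^3/\bigl((1-2xC(x))(1-4x)^{3/2}\bigr) = x^2C(x)^3/(1-4x)^2$; comparing with the paper's table (values $1,11,81,500$ at lengths $3,4,5,6$) shows the displayed series is shifted by one index, so the discrepancy lies in the statement's indexing, not in your computation.
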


  The first few values of $\num_q(\Avn)$ and $\num_q(\Avnn)$ for $q$
  of length $3$ are shown in the table below

  $$
  \begin{array}{c|c|c|c|c|c|c}
      \multicolumn{7}{c}{\Avn } \\
      \text{length} & \num_{123} & \num_{132} & \num_{213} 
      & \num_{231} & \num_{312} & \num_{321} \\
      \hline
      3  & 0     &    1  &    1 &    1 &    1 &    1  \\
      4  & 0     &    9  &    9 &   11 &   11 &   16  \\
      5  & 0     &    57 &   57 &   81 &   81 &  144  \\
      6  & 0     &   312 &  312 &  500 &  500 & 1016  \\ 
      7  & 0     &  1578 & 1578 & 2794 & 2794 & 6271   
    \end{array}
  $$

  \vspace{1pc}

  $$
  \begin{array}{c|c|c|c|c|c|c}
      \multicolumn{7}{c}{\Avnn } \\
      \text{length} & \num_{123} & \num_{132} & \num_{213} 
      & \num_{231} & \num_{312} & \num_{321} \\
      \hline
     3  & 1     &    0  &    1 &    1 &    1 &    1  \\
     4  & 10    &    0  &   11 &   11 &   11 &   13  \\
     5  & 68    &    0  &   81 &   81 &   81 &  109  \\
     6  & 392   &    0  &  500 &  500 &  500 &  748  \\ 
     7  & 2063  &    0  & 2794 & 2794 & 2794 & 4570   
   \end{array}
  $$
  \vspace{1pc}

  Note that $\num_{231}$ and $\num_{312}$ are \emph{not}
  equidistributed as statistics in $\Avnn$. Theorem \ref{bonasthm}
  was proved with a bijection from patterns to patterns, not
  necessarily respecting the underlying permutation.
  
  Turning our attention to the class $\Av(123)$, we will similarly
  classify the expectation of all length $3$ patterns and provide both
  generating functions and exact formula. In addition, we show some
  interesting and surprising connections to patterns in $\Avnn$. In
  particular, we will show that the number of $231$ patterns in each
  set is equal, as suggested by the numerical evidence.

\section{The class Av$(123)$}
\subsection{Patterns of length $2$}

  The simplest place to start is with patterns of length $2$. The
  number of $21$ patterns corresponds to the total number of
  inversions, and these numbers have already been studied, most
  notably in \cite{cheng7}. Clearly, the total number of $21$ patterns
  plus the number of $12$ patterns gives the total number of pairs of
  entries in all permutations in the class, which is given by
  $\binom{n}{2} c_n$. 

  \begin{theorem}[Cheng, Eu, Fu] \label{inv}
    Let $\C_n = \Avn$. Then 
    $$ \sum_{n \geq 0} f_{12}(\C_n) x^n = \frac{x^2 C(x)^2}{1-4x}.$$
    Furthermore, we have that
    $$ \num_{12}(\C_n) = 4^{n-1} - \binom{2n-1}{n}.$$
  \end{theorem}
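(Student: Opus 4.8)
The plan is to count, over all permutations in $\Avn$, the total number of non-inversions, i.e. pairs of positions $i < j$ with $p_i < p_j$. The standard structural decomposition of $123$-avoiding permutations is the starting point: writing $p \in \Avn$, the left-to-right maxima of $p$ form a decreasing subsequence when read against the remaining entries, and in fact $p$ decomposes as two interleaved decreasing sequences (the left-to-right maxima and everything else). Equivalently, and more usefully for generating functions, one peels off the position of the entry $n$: if $n$ sits in position $k$, then to avoid $123$ every entry before position $k$ must be larger than every entry after it (otherwise two small entries before and the large one would be forced into a pattern), wait — more precisely, the entries to the left of $n$ together with $n$ must avoid $123$ and the entries to the right must avoid $123$, and moreover no entry to the left may be smaller than an entry to the right \emph{with something in between} — the clean statement is that $p = \alpha \, n \, \beta$ where every entry of $\alpha$ exceeds every entry of $\beta$ is \emph{not} quite right either; the correct decomposition is the one for $123$-avoiders analogous to the $132$ case. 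I would instead use the block decomposition by first descent or the known first-return decomposition of Dyck paths under the standard bijection.

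Concretely, the cleanest route: let $F_{12}(x) = \sum_n \num_{12}(\C_n) x^n$, and set up a bivariate generating function $P(x,u) = \sum_{p \in \Av(123)} x^{|p|} u^{\text{noninv}(p)}$, so that $F_{12}(x) = \frac{\partial}{\partial u} P(x,u)\big|_{u=1}$. Using the decomposition of a nonempty $123$-avoider $p$ according to the position of the entry $1$ (the minimum): if $1$ is in position $j$, then the entries $p_1,\dots,p_{j-1}$ must themselves avoid $123$ and, crucially, must all be larger than all of $p_{j+1},\dots,p_n$ is again too strong. The genuinely standard fact is: $p$ avoids $123$ iff $p$ can be written with its left-to-right minima in their positions and the rest forming a decreasing sequence; dualizing, I prefer the decomposition $p = L \oplus' R$ is not it. Let me commit to the approach that actually works smoothly, namely Cheng--Eu--Fu's own: use the bijection with Dyck paths and track the statistic, or use the substitution-decomposition where a $123$-avoider is either empty or uniquely of the form where the first position contains some value $m$, everything to its left-in-value... — I would look up and cite the first-return decomposition giving $C(x) = 1 + xC(x)^2$ and refine it by inserting the $u$-weight counting new non-inversions created between the two Catalan factors and the distinguished point, obtaining a functional equation for $P(x,u)$, differentiating at $u=1$, and solving the resulting linear equation for $F_{12}(x)$ using the Fact's identities for $C(x)$.

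From the generating function $\frac{x^2C(x)^2}{1-4x}$, the closed form $4^{n-1}-\binom{2n-1}{n}$ follows by a routine coefficient extraction: $x^2C(x)^2 = x^2 \cdot \frac{C(x)-1}{x} \cdot \frac{1}{1} $ — better, use $xC(x)^2 = C(x)-1$ so $x^2C(x)^2 = x(C(x)-1)$, hence $\frac{x^2C(x)^2}{1-4x} = \frac{xC(x)}{1-4x} - \frac{x}{1-4x}$; then extract coefficients using $[x^n]\frac{1}{1-4x} = 4^n$, $[x^n]\frac{xC(x)}{1-4x} = \sum_{i} 4^{\,n-1-i} c_i$, and simplify the Catalan-times-powers-of-four convolution to $\binom{2n-1}{n}$ via the identity $\sum_{i=0}^{m} 4^{m-i} c_i = \binom{2m+1}{m}$ (or $4^m - \frac12\binom{2m+1}{m+1}$-type identities). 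Alternatively, and perhaps cleanest, combine with Theorem~\ref{inv}'s companion count: since $\num_{12}+\num_{21} = \binom{n}{2}c_n$ and the inversion total $\num_{21}$ is the quantity studied by Cheng--Eu--Fu, one may simply quote their inversion formula and subtract, but I would present the self-contained generating-function derivation.

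The main obstacle will be setting up the functional equation for $P(x,u)$ correctly — specifically, bookkeeping exactly how many new non-inversions are created when the two sub-permutations in the first-return (or minimum-position) decomposition are assembled with the distinguished entry, since the entries of one block are uniformly above or below those of the other and all of them relate to the distinguished entry, so the count of \emph{new} non-inversions is a product of block sizes rather than something additive. That turns the clean algebraic recurrence for $C(x)$ into a recurrence involving $x\partial_x$, which after differentiation at $u=1$ becomes a first-order linear ODE-flavored functional equation; handling that, and recognizing the solution as $\frac{x^2C(x)^2}{1-4x}$, is where the real work lies. Everything downstream — the coefficient extraction to reach $4^{n-1}-\binom{2n-1}{n}$ — is mechanical given the $C(x)$ identities in the Fact.
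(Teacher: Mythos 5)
Your proposal never actually reaches the generating function, and that is where the whole content of the theorem lies. Each structural decomposition you try is floated and then retracted in the same sentence (the ``every entry of $\alpha$ exceeds every entry of $\beta$'' claims, the minimum-position split, the left-to-right-minima description), and the one you finally commit to --- a bivariate $P(x,u)$ with a first-return-type functional equation --- is left entirely unexecuted: no functional equation is written down, the cross-block bookkeeping that you correctly identify as the crux (the product-of-block-sizes term that turns the Catalan recurrence into one involving $x\partial_x$) is never carried out, and you say yourself that handling it ``is where the real work lies.'' So as it stands this is a plan with an acknowledged hole at its center, not a proof. For what it is worth, the paper itself gives no proof: Theorem~\ref{inv} is quoted from Cheng--Eu--Fu \cite{cheng7}, which is essentially the fallback you mention in passing (quote their count and use $\num_{12}+\num_{21}=\binom{n}{2}c_n$). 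If you do want a self-contained argument, the decomposition that makes the bookkeeping trivial is the skew decomposition into indecomposable blocks used later in the paper: cross-block pairs are all inversions, so $\num_{12}$ is additive over blocks and $F_{12}(x)=F^{*}(x)C(x)^2$ with $F^{*}$ the indecomposable-block contribution; the coupling problem that sank your attempted decompositions simply does not arise there, though you would still need to compute $F^{*}(x)=x^2/(1-4x)$.

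Two smaller points on the part you call mechanical. The convolution identity you invoke, $\sum_{i=0}^{m}4^{m-i}c_i=\binom{2m+1}{m}$, is false (at $m=1$ the left side is $5$, the right side $3$); the correct evaluation is $[x^n]\frac{xC(x)}{1-4x}=\tfrac12\bigl(4^{n}-\binom{2n}{n}\bigr)$, obtained from $\frac{C(x)}{1-4x}=\frac{1}{2x}\bigl(\frac{1}{1-4x}-\frac{1}{\sqrt{1-4x}}\bigr)$, and then $\num_{12}(\C_n)=\tfrac12\bigl(4^{n}-\binom{2n}{n}\bigr)-4^{n-1}=4^{n-1}-\binom{2n-1}{n}$ since $\binom{2n}{n}=2\binom{2n-1}{n}$. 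Your reduction $x^2C(x)^2=x(C(x)-1)$ is fine, so this step is repairable; but combined with the unproved functional equation, the argument as written does not establish either assertion of the theorem.
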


  \begin{corollary}
    The number of $21$ patterns in the class of all $n$-permutations
    avoiding $123$ is given by
    $$ \num_{21}(\Avn) = \binom{n}{2} c_n + \binom{2n-1}{n} - 4^{n-1}.$$
  \end{corollary}

\subsection{Patterns of length $3$}

  We turn our attention now to patterns of length $3$, and provide a
  similar classification. To start, using the fact that $123$ is an
  involution provides some immediate identities, since inversion
  provides a natural map from the set $\Avn$ to itself. 

  \begin{fact} 
    In $\Avn$, $\num_{132} = \num_{213}$ and $\num_{231} =
    \num_{312}$.  
  \end{fact}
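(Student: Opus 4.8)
The plan is to prove each identity by exhibiting a bijection of $\Avn$ onto itself that carries occurrences of one pattern to occurrences of the other; since $\num_q(\C)=\sum_{p\in\C}\num_q(p)$ depends only on the multiset of occurrence counts, re-indexing the sum through such a bijection gives the equality at once. The two bijections I would use are the ``trivial'' symmetries of a permutation: the inverse map $p\mapsto p^{-1}$, and the reverse-complement map $p\mapsto p^{rc}$, where $p^{rc}$ is obtained by reading $p$ backwards and then sending each entry $p_i$ to $n+1-p_i$.

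First I would record that both maps send $\Avn$ bijectively (indeed involutively) to itself. This follows from the general equivalences $q\prec p \iff q^{-1}\prec p^{-1}$ and $q\prec p \iff q^{rc}\prec p^{rc}$, together with the observations $123^{-1}=123$ and $123^{rc}=123$ (reversing $123$ gives $321$, and complementing $321$ gives $123$). Consequently $\num_q(p)=\num_{q^{-1}}(p^{-1})$ and $\num_q(p)=\num_{q^{rc}}(p^{rc})$ for every $p\in\Avn$.

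The remaining step is the pattern bookkeeping. For the inverse: $231^{-1}=312$, so $\num_{231}(p)=\num_{312}(p^{-1})$ for every $p$, and summing over $p\in\Avn$ (re-indexed by $p^{-1}$) yields $\num_{231}=\num_{312}$, recovering the earlier Fact in this class. For the reverse-complement: reversing $132$ gives $231$ and then complementing gives $213$, so $132^{rc}=213$; hence $\num_{132}(p)=\num_{213}(p^{rc})$, and summing over the class gives $\num_{132}=\num_{213}$ (one also gets $231^{rc}=312$, redundantly reproving the other identity).

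There is essentially no obstacle here beyond the one bookkeeping subtlety worth flagging: the choice of composite symmetry must actually stabilize $\Avn$, i.e. one must use the fact that $123^{rc}=123$ (and $123^{-1}=123$), since neither plain reverse nor plain complement fixes the pattern $123$ and so neither, by itself, would make the summation argument legitimate.
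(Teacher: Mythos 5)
Your argument is correct and follows essentially the same route as the paper: exploiting trivial symmetries of the square that fix $123$ and hence stabilize $\Avn$, with $231^{-1}=312$ giving the second identity and $132^{rc}=213$ giving the first. In fact your write-up is slightly more complete than the paper's one-line remark, which cites only inversion; since $132$ and $213$ are themselves involutions, inversion alone cannot yield $\num_{132}=\num_{213}$, and the reverse-complement (or equivalently the antidiagonal reflection) you supply is exactly what is needed.
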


  Numerical data and intuition suggest that $\num_{132} < \num_{231}
  < \num_{321}$. We begin by establishing some basic relationships
  between these numbers which will eventually combine to give us
  exact formulae. First, note that the total number of all length $k$
  patterns is exactly $\binom{n}{k} c_n$. For $k = 3$ this gives the
  following fact.

  \begin{fact} \label{relation1}
    In the class $\Avn$ we have that
    $$ \num_{132} + \num_{213} + \num_{231} +
    \num_{312} + \num_{321} = \binom{n}{3} c_n.$$
    Note that since $\num_{132} = \num_{213}$ and
    $\num_{312} = \num_{231}$, we can rewrite this as 
    $$ 2 \num_{132} + 2 \num_{231} + \num_{321} = 
        \binom{n}{3}c_n.$$
  \end{fact}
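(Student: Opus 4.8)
The plan is to prove the identity by a straightforward double count of the set of pairs $(p, \{i_1 < i_2 < i_3\})$, where $p \in \Avn$ and $i_1 < i_2 < i_3$ are three positions in $p$. The key observation — elementary but genuinely the whole point — is that each such triple of positions selects a subsequence $p_{i_1} p_{i_2} p_{i_3}$ whose order type is a permutation in $S_3$, and since $p$ avoids $123$ this order type is never the identity. Hence for every $p \in \Avn$ and every choice of three positions, exactly one of the five indicators ``$p_{i_1} p_{i_2} p_{i_3}$ has pattern $q$'', for $q \in \{132, 213, 231, 312, 321\}$, equals $1$.

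Summing these indicators first over all $\binom{n}{3}$ triples of positions in a fixed $p$ and then over all $p \in \Avn$, the total is on one hand $\sum_q \num_q(\Avn)$ with $q$ ranging over $\{132, 213, 231, 312, 321\}$ — precisely the left-hand side of the claimed identity — and on the other hand $\sum_{p \in \Avn} \binom{n}{3} = \binom{n}{3}\,|\Avn|$. Since $|\Avn| = c_n$ (all classes avoiding a single length-three pattern are enumerated by the Catalan numbers, see \cite{knuth3}), this yields the first displayed equation. The rewritten form is then immediate: substituting the identities $\num_{132} = \num_{213}$ and $\num_{231} = \num_{312}$ from the preceding Fact collapses the five-term sum to $2\num_{132} + 2\num_{231} + \num_{321}$.

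There is no real obstacle here; the argument is a one-line application of the pigeonhole-style observation above together with the known cardinality $|\Avn| = c_n$. The only point requiring any care is noticing that a $123$-avoider leaves no length-three subsequence outside the five listed patterns, so that the five counts genuinely partition the $\binom{n}{3} c_n$ subsequences. The same reasoning gives the more general statement mentioned in the text, that the sum of $\num_q(\Avn)$ over all $q \in S_k$ equals $\binom{n}{k} c_n$, once one also uses that $\num_q(\Avn) = 0$ whenever $q$ itself contains a $123$ pattern.
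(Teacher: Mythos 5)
Your proof is correct and is essentially the argument the paper relies on: each of the $\binom{n}{3}$ position-triples in a $123$-avoiding permutation realizes exactly one of the five non-identity patterns of $S_3$, so summing over all $c_n$ permutations gives $\binom{n}{3}c_n$, and the symmetry $\num_{132}=\num_{213}$, $\num_{231}=\num_{312}$ collapses the sum as stated. Nothing further is needed.
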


  Our next uses Theorem \ref{inv} to provide another linear
  relationship between these three numbers.

  \begin{proposition} \label{relation2}
    In the class $\Avn$, we have
    $$ 4 \num_{132} + 2 \num_{231} = (n-2)\num_{12}.$$
  \end{proposition}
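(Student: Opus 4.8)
The plan is to count, over all $p \in \Avn$, the pairs consisting of a $12$-pattern together with an entry that extends it to a length-$3$ pattern, and to do this in two ways. Fix $p \in \Avn$ and fix a $12$-pattern in $p$, i.e.\ a pair of positions $i < j$ with $p_i < p_j$. Any third entry $p_\ell$ (with $\ell \neq i, j$) together with these two forms one of the five length-$3$ patterns $132, 213, 231, 312$, or $321$ — it cannot form $123$ since $p$ avoids $123$, and crucially it cannot form $123$ even using $p_\ell$ as the ``1'', ``2'', or ``3'', so every one of the $n-2$ remaining entries lands in one of those five buckets. Summing over all $12$-patterns in all $p$, the left side of this double count is $(n-2)\,\num_{12}$.

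Next I would compute the same quantity by grouping according to which length-$3$ pattern is formed and which two of its three entries constitute the chosen ascending pair $p_i < p_j$. In a $132$ pattern (values $1,3,2$ in positions left to right) the ascending pairs among the three entries are $(1,3)$ and $(1,2)$ — two of them; likewise $213$ has ascending pairs $(1,3)$ and $(2,3)$ — two; $231$ has only the ascending pair $(2,3)$ — one; $312$ has only $(1,2)$ — one; and $321$ has none. Hence each occurrence of $132$ or $213$ is counted twice, each occurrence of $231$ or $312$ once, and each $321$ zero times, giving
$$ 2\num_{132} + 2\num_{213} + \num_{231} + \num_{312} = (n-2)\,\num_{12}. $$
Applying the identities $\num_{132} = \num_{213}$ and $\num_{231} = \num_{312}$ from the preceding Fact collapses this to $4\num_{132} + 2\num_{231} = (n-2)\,\num_{12}$, as claimed.

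I expect the only subtle point — and the thing worth stating carefully — is the claim that a $12$-pattern together with any third entry never produces a $123$ pattern; this is exactly the hypothesis that $p$ avoids $123$, and it is what makes the five buckets exhaustive. The rest is a routine inclusion of ascending subpairs in each of the five length-$3$ patterns, and the final algebra is immediate.
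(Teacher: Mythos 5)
Your proof is correct and is essentially the paper's own argument: the paper likewise counts extensions of a $12$-pattern by a third entry to get $(n-2)\num_{12}$ and corrects for the fact that $132$ and $213$ each contain two ascending pairs, so your weighting of each triple by its number of ascending pairs is the same count, just phrased without the inclusion--exclusion step. The only cosmetic slip is listing $321$ among the possible patterns of the triple --- it cannot arise, since the triple contains the chosen ascending pair --- but as you assign it weight zero this changes nothing.
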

  \begin{proof}
    Rewrite the equation as 
    $$(n-2)\num_{12} - (\num_{132} + \num_{213}) =
    \num_{132}+\num_{213}+\num_{231}+\num_{312} .$$ 
    Claim that both sides count the total number of length $3$
    patterns which contain at least one $12$ pattern. Indeed, the
    right hand side counts all length $3$ patterns other than $321$.
    The left hand side first takes a $12$ pattern and adds another
    entry to it. However, this double counts each triple which
    has two $12$ patterns, and these are exactly the patterns $132$
    and $213$. Subtracting these off yields the desired identity.
  \end{proof}

  Note that we now have two linear relationships between the three
  unknown quantities, and so some new information would completely
  solve the system. We summarize this in the following lemma.

  \begin{lemma}\label{linsys}
    Let $\C = \Avn$, and let $a_n = \num_{132}(\C) = \num_{213}(\C)$, 
    $b_n = \num_{231}(\C) = \num_{312}(\C)$, and $d_n =
    \num_{321}(\C)$. 
    Then we have
    $$ \begin{array}{ll}
      2a_n + 2b_n + d_n & = \binom{n}{3} c_n \\
      4a_n + 2b_n     & = 4^{n-1} - \binom{2n-1}{n}
        \end{array}.
    $$
  \end{lemma}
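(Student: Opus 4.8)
The plan is to observe that Lemma \ref{linsys} carries no new combinatorial content: it simply restates, in the notation $a_n, b_n, d_n$, two identities that have already been established, so the proof is purely a matter of assembly and bookkeeping.

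For the first equation I would quote Fact \ref{relation1} essentially verbatim. Every triple of positions in a permutation of $\C_n$ induces one of the five patterns $132, 213, 231, 312, 321$ (the pattern $123$ being forbidden in $\C_n$), so summing $\num_q(p)$ over all $p \in \C_n$ and all five such $q$ accounts for exactly $\binom{n}{3} c_n$ triples; hence $\num_{132} + \num_{213} + \num_{231} + \num_{312} + \num_{321} = \binom{n}{3} c_n$. Applying the symmetries $\num_{132} = \num_{213}$ and $\num_{231} = \num_{312}$, which come from the inversion map (valid since $123$ is an involution so $\Avn$ is closed under inversion), collapses the left-hand side to $2a_n + 2b_n + d_n$.

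For the second equation I would start from Proposition \ref{relation2}, which gives $4\num_{132} + 2\num_{231} = (n-2)\num_{12}$, that is $4a_n + 2b_n = (n-2)\,\num_{12}(\C_n)$, and then substitute the Cheng--Eu--Fu closed form $\num_{12}(\C_n) = 4^{n-1} - \binom{2n-1}{n}$ from Theorem \ref{inv}. The only thing to check is that no off-by-one in the indexing conventions ($n \geq 0$ versus small $n$) creeps in when the two results are combined, and this is routine; there is no genuine obstacle here, since all the substance already lives in Proposition \ref{relation2} and Theorem \ref{inv}. I would close by pointing out that these are two independent linear relations among the three unknowns $a_n, b_n, d_n$, so the system is underdetermined as it stands and one further identity—for instance a direct count of $321$ patterns (the most common one) or of $231$ patterns—will be needed to solve it, which the remainder of this section must supply.
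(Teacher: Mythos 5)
Your proposal is correct and matches the paper's own (implicit) argument exactly: the lemma is presented as a summary of Fact \ref{relation1} (with the inversion symmetries collapsing the five pattern counts) and Proposition \ref{relation2} combined with the Cheng--Eu--Fu formula from Theorem \ref{inv}. Nothing further is needed.
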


  We note that Proposition \ref{relation2} has a
  complementary analogue, obtained by counting inversions instead of
  noninversions. However, this leads to a relation which is linearly
  dependent on the first two. It takes a new approach to yield new
  information, which requires a few new definitions.

  \begin{definition}
    A permutation $p = p_1p_2 \ldots p_n$ is \emph{decomposable}
    (sometimes referred to as skew-decomposable) if
    there exists $k \in [n]$ such that for all $i \leq k$ and all $j <
    k$, we have that $p_i > p_j$. 
    An \emph{indecomposable} permutation is one for which no such $k$
    exists.
  \end{definition}

  \begin{definition} 
    Denote the class of all indecomposable $123$ avoiding permutations
    by $\Av^* (123)$, and $\Av^* (123) \cap S_n$ by $\Avns$.
  \end{definition}

  In general, for simplicity of notation, indecomposability will be
  denoted with a star. Our first step, naturally, is to find the
  size of the set $\Avns$.
  
  \begin{proposition}
    For all $n \geq 1$, 
    $$|\Avns| = \frac{1}{n} \binom{2n-2}{n-1} = c_{n-1}.$$
  \end{proposition}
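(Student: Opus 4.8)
The plan is to set up a generating-function identity relating indecomposable $123$-avoiders to all $123$-avoiders via the skew-decomposition structure. The key structural observation is that a decomposable permutation $p$ has a unique "last" decomposition point: there is a largest $k$ witnessing decomposability, and this splits $p$ uniquely as a block of the largest $k-1$... wait, more carefully: if $p$ is decomposable, write $p$ as $\sigma$ placed above and to the left of $\tau$, where $\tau$ is the (nonempty) indecomposable final block and $\sigma$ is an arbitrary (possibly empty) $123$-avoider on the larger values, sitting in the earlier positions. The constraint that the whole permutation avoids $123$ is automatic here: since every entry of $\sigma$ exceeds every entry of $\tau$, an occurrence of $123$ cannot use entries from both blocks (the block structure is order-reversing between blocks), so $p$ avoids $123$ iff both $\sigma$ and $\tau$ do. This gives a bijection
$$ \Av(123) \;\longleftrightarrow\; \{\varepsilon\} \;\sqcup\; \bigl(\Av(123) \times \Av^*(123)\bigr), $$
where the empty permutation corresponds to $\varepsilon$ and every nonempty $p$ corresponds to $(\sigma,\tau)$ with $\tau$ its unique maximal indecomposable final block. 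I would phrase this as: every $123$-avoider decomposes uniquely as a sequence... no — as $\sigma \ominus \tau$ with $\tau$ indecomposable.

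Next I translate this into generating functions. Let $F(x) = \sum_{n\ge 0} c_n x^n = C(x)$ count all $123$-avoiders by length (using Knuth's result that $|\Av_n(123)| = c_n$, which follows from the Catalan enumeration cited in the background), and let $G(x) = \sum_{n\ge 1} |\Av_n^*(123)|\, x^n$ count the indecomposable ones. The bijection above yields
$$ F(x) = 1 + F(x)\,G(x), $$
hence $G(x) = \dfrac{F(x)-1}{F(x)} = \dfrac{C(x)-1}{C(x)}$. By the displayed identity in the Fact following the definition of $C(x)$, this equals $x C(x)$. Therefore
$$ G(x) = x C(x) = \sum_{n\ge 1} c_{n-1} x^n, $$
and comparing coefficients gives $|\Av_n^*(123)| = c_{n-1} = \frac{1}{n}\binom{2n-2}{n-1}$ for all $n \ge 1$, as claimed.

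The main obstacle — really the only nontrivial point — is verifying that the skew-decomposition is genuinely canonical and compatible with $123$-avoidance: that is, (i) for a decomposable $p$ there is a largest decomposition index $k$, so the final indecomposable block $\tau$ is well defined and nonempty; (ii) the prefix $\sigma$ can be an arbitrary $123$-avoider with no further constraint; and (iii) $p$ avoids $123$ exactly when $\sigma$ and $\tau$ do. Point (iii) is where one must be slightly careful: because all entries of $\sigma$ lie above all entries of $\tau$ in value while $\sigma$ occupies the earlier positions, any three positions spanning both blocks read as something of the form "large$\cdots$small", which cannot contain an increasing subsequence of length $3$ unless it lies entirely within one block; so no new $123$ is created or destroyed by gluing. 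Once this structural lemma is pinned down, the generating-function computation is immediate from the stated Catalan identity.
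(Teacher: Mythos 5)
Your proof is correct and takes essentially the same route as the paper: both decompose a $123$-avoider into a skew sum involving indecomposable blocks, translate this into a generating-function identity, and solve using $\frac{C(x)-1}{C(x)} = xC(x)$. The only cosmetic difference is that you peel off a single indecomposable block ($F = 1 + FG$) while the paper writes the full sequence decomposition ($C = 1/(1-C^*)$); these are algebraically equivalent.
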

  \begin{proof}
    We know that $|\Avn| = c_n$, so 
    $$ \sum_{n \geq 0} |\Avn| x^n = \frac{1 - \sqrt{1 - 4x}}{2x} =
    C(x).$$ Let $C^*(x) = \sum_{n\geq 1} |\Avns| x^n$. 
    Every permutation in $\Avn$ can be expressed as a skew sum of
    indecomposable $123$ avoiding permutations, so it follows that
    $$ C(x) = 1 + C^*(x) + (C^*(x))^2 + (C^*(x))^3 + \ldots =
    \frac{1}{1-C^*(x)}.$$
    Solving this algebraically gives that $C^*(x) =
    \frac{C(x)-1}{C(x)} = xC(x)$,
    which finishes the proof.  
  \end{proof}


  Lemma \ref{linsys} now has an immediate
  indecomposable analogue, and the numbers $\num_q \left(\Avn\right)$
  and $\num_q \left( \Avns \right)$ can be related relatively easily
  for each pattern $q$.
  However, this alone does not allow us to solve for an exact formula.

  Our new information will come from exactly counting the number of
  $213$ patterns in the set $\Avns$ by building a bijection to Dyck
  paths. We start by defining these paths, which are counted by the
  Catalan numbers. 

  \begin{definition}
    A \emph{Dyck path} of length $2n$ (or of semilength $n$) is
    defined as a sequence of steps from the set $\{(1,1),(-1,1)\}$
    which begins at $(0,0)$, ends at $(2n,0)$, and never steps below
    the line $x=0$. 
  \end{definition}

  \begin{lemma} \label{biglemma}
    The generating function $A^*(x)$ for the number of $213$ patterns in
    $\Avns$ is given by 
    $$ A^*(x) = \sum_{n\geq 0} \num_{213}(\Avns) =
    \frac{x^3C(x)}{(1-4x)^{3/2}} = \frac{x^2}{2(1-4x)^{3/2}} -
    \frac{x^2}{2(1-4x)}.$$
  \end{lemma}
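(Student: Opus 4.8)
The plan is to set up a structural decomposition of indecomposable 123-avoiding permutations that tracks, simultaneously, the permutation and a marked 213-pattern, and to translate this into a bijection with (decorated) Dyck paths whose generating function is the right-hand side. First I would recall the standard shape of a 123-avoiding permutation: its entries split into two decreasing subsequences, and more usefully, reading left to right, the left-to-right minima form one decreasing run and the remaining entries form another decreasing subsequence interleaved among them; this is exactly the data encoded by a Dyck path under the classical bijection (up-steps record new left-to-right minima, down-steps record the non-minima). Indecomposability corresponds to the Dyck path not returning to the axis before the final step, i.e.\ it contributes the factor $xC(x)$ already identified for $C^*(x)$. I would make this correspondence explicit so that counting 213-patterns becomes counting certain configurations of steps in the path.

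Next I would characterize when three positions $i<j<k$ form a 213 pattern in terms of the path. In a 123-avoiding permutation a 213 occurrence consists of a descent-type pair $p_i>p_j$ together with a later larger entry $p_k>p_i$; since $p_k$ is larger than both earlier entries and lies to their right, in particular $p_k$ is not a left-to-right minimum, and one checks that $p_j$ may or may not be a left-to-right minimum while $p_i$ is forced into a specific role. Translating through the Dyck-path bijection, a marked 213 pattern becomes a choice of one down-step (for the ``1'' role, the small middle value), together with a choice of an earlier position dominating it (the ``2'') and a later up-or-down step realizing the ``3''; the count of such triples attached to a fixed path is a polynomial in the heights of the path, and summing over all Dyck paths of semilength $n$ produces a generating function expressible via $C(x)$ and $(1-4x)^{-1/2}$. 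This is where the Catalan-type identities in the stated Fact — especially $C(x)^2 = 1/(1-xC(x))^2$ and the derivative-flavored sums giving $(1-4x)^{-1/2} = \sum \binom{2n}{n}x^n$ — will be used to collapse the resulting double sums into the closed form $\dfrac{x^3 C(x)}{(1-4x)^{3/2}}$; the alternative expression $\dfrac{x^2}{2(1-4x)^{3/2}} - \dfrac{x^2}{2(1-4x)}$ then follows purely algebraically from $xC(x) = \frac{1-\sqrt{1-4x}}{2}$ and needs no combinatorics.

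The main obstacle I anticipate is getting the bookkeeping of the marked 213 pattern exactly right: one must be careful not to double-count, and to correctly handle the boundary cases where two of the three marked entries are left-to-right minima or where the ``2''-entry and ``1''-entry are adjacent in value, since the Dyck-path encoding treats minima and non-minima asymmetrically. A clean way to manage this is to first prove a recursion: decompose an indecomposable $p \in \Avns$ by removing its first entry (which, by indecomposability, is a left-to-right minimum sitting above a smaller structure), write $A^*(x)$ in terms of itself, the ordinary count $\num_{213}(\Avn)$, and simpler quantities like $\num_{12}$ already computed via Theorem~\ref{inv}, and then solve the resulting functional equation. I would likely present the argument in that recursive form rather than as a single global bijection, since it isolates each contribution (patterns lying entirely in the smaller block, patterns using the removed entry as the ``2'', patterns using it as part of a larger configuration) and makes the algebra transparent; the final step is then just verifying that the solution of the recursion matches the claimed generating function, which is routine given the Catalan identities.
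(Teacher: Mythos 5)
Your overall architecture (two decreasing subsequences, a Krattenthaler-type bijection to Dyck paths with indecomposability accounting for the $xC(x)$ factor, then summing a path statistic) matches the paper's, but the crucial step is wrong as stated, and that is exactly where the lemma is won or lost. In an indecomposable $123$-avoider every entry is a left-to-right minimum or a right-to-left maximum and never both, and in any $213$ occurrence $p_ip_jp_k$ (values $p_j<p_i<p_k$) \emph{both} $p_i$ and $p_j$ are forced to be left-to-right minima while $p_k$ is forced to be a non-minimum: $p_k$ cannot be a minimum because $p_i<p_k$ precedes it, and neither $p_i$ nor $p_j$ can be a right-to-left maximum because $p_k$ is larger and to their right. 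Your assertion that the ``1'' corresponds to a down-step and that ``$p_j$ may or may not be a left-to-right minimum'' has these roles backwards, and without the correct forcing you never reach the identity that makes the count tractable, namely $\num_{213}(p)=\sum_b\binom{\Sp b}{2}$ summed over the non-minima $b$, where $\Sp b$ is the number of minima below and to the left of $b$. Under the bijection the non-minima become peaks of height $\Sp b$, so the remaining work is the peak-height generating function $H(x,u)=\frac{uxC(x)}{1-uxC(x)-xC(x)}$ obtained from the first-return decomposition, followed by $x\,\partial_u^2H(x,u)|_{u=1}/2=\frac{x^3C(x)}{(1-4x)^{3/2}}$; your sketch stops at ``a polynomial in the heights'' and ``Catalan identities collapse the sums,'' which is precisely the computation still to be done.

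The fallback recursion is not a safe repair either. Deleting the first entry of an indecomposable $123$-avoider need not leave an indecomposable permutation; the deleted entry can only serve as the ``2'' of a $213$ occurrence, and the number of such occurrences is a joint statistic (pairs of a later smaller and a later larger entry relative to $p_1$) that is not determined by $\num_{12}$ alone; and expressing $A^*(x)$ through $\num_{213}(\Avn)$ risks circularity, since the full count $A(x)=C(x)^2A^*(x)$ of Theorem \ref{213pats} is itself \emph{derived from} $A^*(x)$, so you would have to verify that your recursion is genuinely independent of that skew-sum relation before the resulting system could be solved.
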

  \begin{proof}
    The proof consists of three parts: First, we examine the structure
    of permutations in $\Avns$, and find a simple way of counting the number
    of $213$ patterns. Second, we build a bijection onto Dyck paths
    which maps $213$ patterns to a path statistic. Finally, we find
    the weighted sum of all Dyck paths with respect to this statistic.


    Fix $n$, and let a permutation $p \in \Avns$. Note that since $p$
    avoids $123$, $p$ can be viewed as a union of two descending
    sequences, so every entry in $p$ is a left-to-right minima or a
    right-to-left maxima, and by indecomposability no entry is both.
    Graph $p$ on an $n \times n$ lattice by plotting $(i,p(i))$ for
    each $i \in [n]$, and color each left-to-right minima red and each
    right-to-left maxima blue.  Denote the sequence of red entries
    (ordered from left to right) by $\R=(r_1,r_2,\ldots r_j)$, and the
    sequence of blue entries by $\B=(b_1,b_2,\ldots b_k)$. Denote by
    $\Sp b_i$ the number of red entries below and to the left of
    $b_i$. Note that $\Sp b_i \geq 1$ for all $b_i$ by
    indecomposability. Now, we count the number of $213$ patterns in $p$. It
    follows that for any such pattern $q$, the $2$ entry and $1$ entry
    must be red, and the $3$ entry blue. It is also clear that each
    blue entry is contained in $\binom{\Sp b_i}{2}$ $213$ patterns.
    Therefore we have that 
    $$ \num_{213} (p) = \sum_{i=1}^{k} \binom{\Sp b_i}{2}.$$

    Now we are ready to build our bijection $\phi: \Avns \ra
    \D_{n-1}$, where $\D_{n-1}$ denotes the set of Dyck paths of
    semilength $n-1$. From each blue vertex, extend a vertical line to
    the $x$-axis and a horizontal line to the $y$-axis, and color each
    point of intersection of these lines green. Define a path $P'$
    from $(1,n)$ to $(n,1)$ by the following rules: 
    \begin{enumerate}[1)] 
      \item Begin by walking east from $(1,n)$ 
      \item At a blue vertex, turn south and continue walking
      \item At a green vertex, turn eash and continue walking 
      \item End at $(n,1)$ 
    \end{enumerate} 
    Rotate the path $P'$ by $\pi/4$ radians counter-clockwise to
    obtain a Dyck path $P$. This path is a slight modification of the
    path given by Krattenthaler's bijection \cite{krat01}, taking
    advantage of the indecomposability of the permutation to yield a
    more geometric description. This
    geometric interpretation of the bijection gives some additional
    insight into the number of $213$ patterns.



    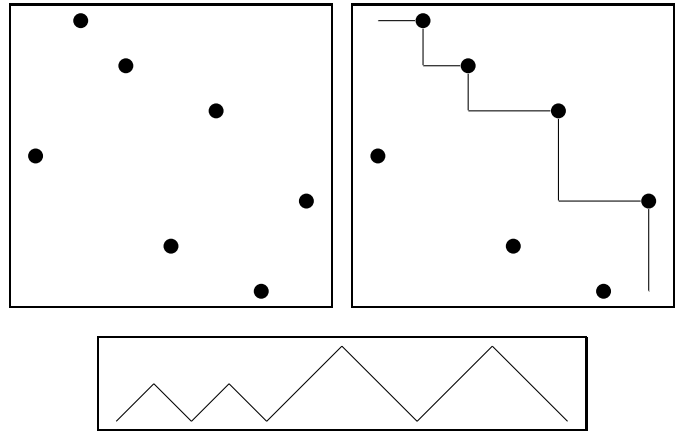
\begin{figure}[ht]
      \centering \label{permtopath}
      
      \subfloat{
      \fbox{
      \begin{tikzpicture}
      [scale=.6, auto=center, every node/.style={circle, 
      fill=black, inner sep=2pt, minimum size=2pt}]
        \foreach \x/\y in {1/4,2/7,3/6,4/2,5/5,6/1,7/3}{
          \node (p-\x) at (\x,\y) {}; }
      \end{tikzpicture} }
      }
      \subfloat{
      \fbox{
      \begin{tikzpicture}
      [scale=.6, auto=center]
        \foreach \x/\y in {1/4,4/2,6/1}{
          \node[circle,fill=black,inner sep=2pt, minimum size=2pt] 
          (r\x) at (\x,\y) {}; }
        \foreach \num/\x/\y in {1/2/7,2/3/6,3/5/5,4/7/3}{
          \node[circle,fill=black,inner sep=2pt, minimum size=2pt] 
          (b\num) at (\x,\y) {};}
        \foreach \num/\x/\y in {1/1/7,2/2/6,3/3/5,4/5/3,5/7/1}{
          \node[circle,fill=black, inner sep=0pt, minimum size=0pt] 
          (g\num) at (\x,\y) {};}
        \foreach \to/\from in {g1/b1,b1/g2,g2/b2,b2/g3,
                      g3/b3,b3/g4,g4/b4,b4/g5}{
          \draw (\to) -- (\from);}
      \end{tikzpicture} } 
      } \\

      \subfloat{
      \fbox{
      \begin{tikzpicture}
      [scale=.5, auto=center]
        \draw (0,0) -- (1,1);
        \draw (1,1) -- (2,0);
        \draw (2,0) -- (3,1);
        \draw (3,1) -- (4,0);
        \draw (4,0) -- (6,2);
        \draw (6,2) -- (8,0);
        \draw (8,0) -- (10,2);
        \draw (10,2) -- (12,0);
      \end{tikzpicture} }
      }

      \caption{$\phi(4762513) = UDUDUUDDUUDD$}
    \end{figure}

    Note that each blue entry in $p$ produces a peak in $P$.
    Furthermore, $b_i$ corresponds to a peak of height $\Sp b_i$
    above the $x$-axis in $P$. Therefore, if we let $h_{n,k}$ denote
    the total number of peaks of height $k$ in all Dyck paths
    of semilength $n$, we have that 
    $$ \num_{213}(\Avns) =\sum_{k = 1}^{n-1} \binom{k}{2} h_{n-1,k}.$$

    Finally, we can compute $H(x,u) = \sum_{n,k \geq 0} h_{n,k} x^n
    u^k$ as follows. First, note that since each Dyck path begins with
    an upstep it has a unique first point at which the path returns
    to the $x$-axis, so we can decompose each path $P$ of length $n$
    into the concatenation of two shorter paths $Q$
    and $R$. This gives that $P = uQdR$, where $u$ denotes an upstep and $d$ a
    downstep, and each peak of height $k-1$ in $Q$ and height $k$ in
    $R$ leads to a peak of height $k$ in $P$. With this in mind, we
    have the following generating function relation:
    $$ H(x,u) = ux(H(x,u)+1)C(x) + xH(x,u)C(x) .$$
    Here the first term counts the peaks from the $uQd$ part,
    including the case when $Q$ is empty. The second term counts the
    contribution from the $R$ part.  Rearranging leads to 
    $$ H(x,u) = \frac{uxC(x)}{1-uxC(x)-xC(x)}.$$

    Now, to count $213$ patterns, we need to count each peak with
    weight $\binom{k}{2}$. By taking derivatives twice with respect to
    $u$, setting $u=1$, dividing by two and scaling by $x$,
    we find that 
    $$ \begin{aligned}
    \sum_{n,k \geq 0} \binom{k}{2} h_{n-1,k}x^n &  
    = x \frac{\left. \partial_u ^2 H(x,u)\right|_{u=1}}{2} 
    = \frac{x^3C(x)}{(1-4x)^{\frac32}} \\
    & = x^3 + 7x^4 + 38x^5 + 187x^6 + 874x^7 + \ldots \ .
    \end{aligned}
    $$
        
    The sequence $0,0,1,7,38,187\ldots$ is entry A000531 in the OEIS.
    Finally, the correspondence between peaks and  $213$ patterns 
    completes the proof.
  \end{proof}

  Now, it is relatively simple to move from the class of
  indecomposable $123$ avoiding permutations to the larger class of
  all $123$ avoiding permutations. 
   
  \begin{theorem} \label{213pats}
    Let $a_n$ be the number of $213$ patterns in $\Av_n 123$. Then  
    $$ \sum_{n\geq 0} a_n x^n = \frac{x^3C(x)^3}{(1-4x)^{3/2}} = 
        \frac{x-1}{2(1-4x)} - \frac{3x-1}{2(1-4x)^{3/2}} .$$
  \end{theorem}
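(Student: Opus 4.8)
The plan is to leverage Lemma~\ref{biglemma} together with the skew-decomposition of $123$-avoiding permutations already used to compute $|\Avns|$. Every permutation $p \in \Avn$ decomposes uniquely as a skew sum $p = \sigma_1 \ominus \sigma_2 \ominus \cdots \ominus \sigma_m$ of indecomposable blocks, each $\sigma_i \in \Av^*(123)$. The key structural observation is how a $213$ pattern can sit across this decomposition: since skew sum places each block strictly above and to the left of all later blocks, a $213$ occurrence in $p$ either lies entirely within one block $\sigma_i$, or it straddles blocks. If it straddles, the ``$3$'' entry (the largest) must come from the leftmost block involved; but then the ``$2$'' and ``$1$'' entries, being smaller, lie in strictly later blocks, which is impossible because the ``$2$'' must be larger than the ``$1$'' yet a later block contributes only smaller values than an earlier one — wait, more carefully: in $x\ominus y$ every entry of $x$ exceeds every entry of $y$, so a pattern using entries from two different blocks has its block-$x$ entries all larger than its block-$y$ entries. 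For $213$, the two largest symbols are ``$2$'' and ``$3$'', i.e. positions $2,3$ of the pattern word $213$; the smallest is ``$1$'' in position $1$. So a straddling occurrence would need the first (positionally) entry to be the smallest, forcing it into a \emph{later} block than the others, contradicting left-to-right order. Hence \textbf{every $213$ pattern lies entirely inside a single indecomposable block}.

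Given this, the count is purely combinatorial bookkeeping over the decomposition. If a composition of $n$ into $m$ parts of sizes $n_1, \dots, n_m$ is chosen and an indecomposable $123$-avoider of each size $n_i$ is selected, the number of $213$ patterns in the resulting $p$ is $\sum_i \num_{213}(\sigma_i)$. Summing over all $p \in \Avn$ and translating to generating functions: a marked block contributes $A^*(x)$ (the generating function of Lemma~\ref{biglemma}, counting size weighted by number of $213$ patterns), while each unmarked block contributes $C^*(x) = xC(x)$, and the total number of blocks is a sequence, so
$$ \sum_{n \geq 0} a_n x^n \;=\; \frac{A^*(x)}{(1 - C^*(x))^2} \;=\; \frac{A^*(x)}{(1 - xC(x))^2}. $$
Using the Fact that $(1-xC(x))^{-2} = C(x)^2$ and that $A^*(x) = \dfrac{x^3 C(x)}{(1-4x)^{3/2}}$ from Lemma~\ref{biglemma}, this immediately gives $\sum a_n x^n = \dfrac{x^3 C(x)^3}{(1-4x)^{3/2}}$.

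For the closed form on the right, I would write $C(x)^3$ in terms of $\sqrt{1-4x}$. Since $xC(x)^2 = C(x) - 1$, we get $x^3 C(x)^3 = x(C(x)-1)(xC(x)) = x(C(x)-1)\cdot\frac{C(x)-1}{C(x)} \cdot \frac{1}{\,}$ — cleaner is to just substitute $C(x) = \frac{1 - \sqrt{1-4x}}{2x}$ and simplify; one finds $x^3 C(x)^3 = \tfrac14\big((1-3x) - (1-x)\sqrt{1-4x}\big)$, so that dividing by $(1-4x)^{3/2}$ splits as $\frac{x-1}{2(1-4x)} - \frac{3x-1}{2(1-4x)^{3/2}}$ after collecting terms (the $\sqrt{1-4x}$ in the numerator pairs with one factor of $(1-4x)^{3/2}$ to give $(1-4x)$). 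This is routine algebra.

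The main obstacle is the structural lemma — making airtight the claim that no $213$ pattern straddles two skew-blocks, and more importantly that an \emph{unmarked} block in the symbolic method really contributes only the raw count $C^*(x)$ (i.e. confirming $213$ patterns never form \emph{between} a marked and unmarked block, which is exactly the same straddling argument). Once that is pinned down, everything else is the standard ``sequence of blocks, one marked'' transfer and a short generating-function simplification.
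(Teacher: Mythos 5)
Your proposal is correct and takes essentially the same route as the paper: both decompose a $123$-avoider into a skew sum of indecomposable blocks, observe that a $213$ occurrence cannot straddle blocks, and conclude $A(x)=C(x)^2A^*(x)$ --- the paper phrases this recursively as $A = A^*C + xCA$ while you mark one block in a sequence, which is equivalent. One tiny arithmetic slip: $x^3C(x)^3 = \tfrac12\bigl((1-3x)-(1-x)\sqrt{1-4x}\bigr)$, not $\tfrac14(\cdots)$, though the final partial-fraction split you state is nevertheless the right one.
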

  \begin{proof}
    Let $A(x)$ be the generating function for the numbers $a_n$, and
    let $A^*(x)$
    denote the generating function for the number of $213$ patterns in
    \emph{indecomposable} $123$ avoiding permutations. 
    
    Now, any permutation $p$ in $\Av(123)$ can be written uniquely as a
    skew sum of a nonempty indecomposable $123$ avoiding
    permutation $q$ and another, possibly empty, $123$ avoiding
    permutation $r$. Now, it is clear that any $213$ pattern in $p$
    must be contained entirely in either $q$ or $r$. This leads to
    the following relation:
    $$ A(x) = A^*(x)C(x) + xC(x)A(x).$$
    Solving for $A$ gives
    $$ A(x) = \frac{A^*(x) C(x)}{1-xC(x)} = C^2(x) A^*(x).$$
    Lemma \ref{biglemma} now implies
    $$ A(x) = \frac{x^3 C(x)^3}{(1-4x)^{3/2}}.$$
  \end{proof}
 
  Theorem \ref{213pats} combined with Lemma \ref{linsys} allows us to
  obtain both generating functions and exact formula for the
  occurrence of all length $3$ patterns in $\Avn$. We start with $231$
  patterns, which reveal a striking connection to the class $\Av(132)$. 

  \begin{corollary} \label{231pats}
    Let $b_n$ denote the number of $231$ (or $312$) patterns in all
    $123$ avoiding $n$-permutations. Then 
    $$\sum_{n \geq 0} b_n x^n= 
    \frac{3x-1}{(1-4x)^{2}} - \frac{4x^2 - 5x + 1}{(1-4x)^{5/2}}.$$
  \end{corollary}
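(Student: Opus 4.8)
The plan is to solve the linear system of Lemma \ref{linsys} for $b_n$ using the freshly-obtained generating function for $a_n = \num_{213}$ from Theorem \ref{213pats}. Writing $A(x) = \sum a_n x^n$, $B(x) = \sum b_n x^n$, $D(x) = \sum d_n x^n$, and $T(x) = \sum \binom{n}{3}c_n x^n$, the two relations of Lemma \ref{linsys} become $2A(x) + 2B(x) + D(x) = T(x)$ and $4A(x) + 2B(x) = \frac{x^2 C(x)^2}{1-4x}$ (using Theorem \ref{inv} for the right-hand side of the second). The second equation immediately gives
\[
B(x) = \frac{1}{2}\left(\frac{x^2 C(x)^2}{1-4x} - 4A(x)\right) = \frac{x^2 C(x)^2}{2(1-4x)} - \frac{2x^3 C(x)^3}{(1-4x)^{3/2}},
\]
so in fact only the inversion relation (Proposition \ref{relation2}) and Theorem \ref{213pats} are needed; the first relation of the lemma is reserved for extracting $d_n$ later.

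The remaining work is to simplify this closed form into the stated expression $\frac{3x-1}{(1-4x)^2} - \frac{4x^2-5x+1}{(1-4x)^{5/2}}$. First I would record the partial-fraction-style identities for the two pieces already computed in the excerpt: from Theorem \ref{inv}, $\frac{x^2 C(x)^2}{1-4x}$, and from Theorem \ref{213pats}, $\frac{x^3 C(x)^3}{(1-4x)^{3/2}} = \frac{x-1}{2(1-4x)} - \frac{3x-1}{2(1-4x)^{3/2}}$. Using the identities of the Fact following the definition of $C(x)$ — in particular $C(x)^2 = \frac{1}{(1-xC(x))^2}$ and $xC(x)^2 = C(x)^2 - C(x)$ together with $\sqrt{1-4x} = 1 - 2xC(x)$ — one rewrites $\frac{x^2 C(x)^2}{1-4x}$ as a combination of $\frac{1}{1-4x}$, $\frac{1}{(1-4x)^{3/2}}$ and lower-order rational terms. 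Substituting both expansions into $B(x) = \frac12\left(\frac{x^2C(x)^2}{1-4x}\right) - \frac{2x^3C(x)^3}{(1-4x)^{3/2}}$ and collecting terms over the common denominators $(1-4x)^2$ and $(1-4x)^{5/2}$ should yield exactly the claimed formula.

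The main obstacle I anticipate is purely bookkeeping: the two summands naturally live over denominators $(1-4x)$, $(1-4x)^{3/2}$ while the target lives over $(1-4x)^2$, $(1-4x)^{5/2}$, so one must clear an extra factor of $(1-4x)$ and an extra factor of $(1-4x)^{3/2}$ and track the polynomial numerators carefully — there is real opportunity for sign errors in the coefficients $3x-1$ and $4x^2-5x+1$. A good sanity check, which I would perform at the end, is to expand the candidate generating function as a power series and confirm agreement with the table values $b_3,\dots,b_7 = 1, 11, 81, 500, 2794$; matching five terms past the leading zero gives high confidence in the algebra. As an alternative route that avoids the $C(x)$-manipulations entirely, one can instead verify the proposed $B(x)$ directly by checking that $\frac12\left(\frac{x^2C(x)^2}{1-4x}\right) - \frac{2x^3C(x)^3}{(1-4x)^{3/2}}$ minus the claimed expression is identically zero, clearing all radicals by repeated use of $1-4x = (1-2xC(x))^2$ and $xC(x)^2 - C(x) + 1 = 0$; this reduces the identity to a polynomial identity in $x$ and $C(x)$ that collapses modulo the Catalan relation.
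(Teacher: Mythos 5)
Your overall strategy---feed Theorem \ref{213pats} into the linear relations and solve for $B(x)$---is exactly the paper's, but your second equation is wrong, and the error is fatal rather than cosmetic. Proposition \ref{relation2} says $4\num_{132} + 2\num_{231} = (n-2)\num_{12}$, so at the generating function level the right-hand side must be the \emph{derivative-weighted} series $\sum_{n}(n-2)j_n x^n = x^3\bigl(J(x)/x^2\bigr)'$, not $J(x)=\frac{x^2C(x)^2}{1-4x}$ itself. (The printed statement of Lemma \ref{linsys} does omit the factor $n-2$, which may be what misled you, but it is a typo: for $n=4$ one has $4a_4+2b_4 = 4\cdot 9 + 2\cdot 11 = 58$, while $\num_{12}(\Av_4(123)) = 29$; the paper's own proof of this corollary works with $\sum_n (n-2)j_nx^n = x^3\bigl(J(x)/x^2\bigr)'$.) Consequently your intermediate closed form $B(x) = \frac{x^2C(x)^2}{2(1-4x)} - \frac{2x^3C(x)^3}{(1-4x)^{3/2}}$ is not the generating function for $b_n$: its coefficient of $x^4$ is $\tfrac{29}{2}-18 = -\tfrac72$, not $11$. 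No amount of the proposed $C(x)$-manipulation can turn it into the claimed expression, and the series sanity check you defer to the very end would have flagged the problem at once.

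With the corrected relation the computation goes through essentially as in the paper. Writing $s=\sqrt{1-4x}=1-2xC(x)$ and using $C'(x)=C(x)^2/s$ together with $xC(x)^2 = C(x)-1$ (so that $C(x)s = 2-C(x)$), one gets
$$ 2B(x) \;=\; x^3\frac{d}{dx}\!\left(\frac{C(x)^2}{1-4x}\right) - 4A(x) \;=\; \frac{2x^3C(x)^3s + 4x^3C(x)^2}{(1-4x)^2} - \frac{4x^3C(x)^3}{s^{3}} \;=\; \frac{2x^3C(x)^3}{(1-4x)^2}, $$
i.e.\ $B(x)=\frac{x^3C(x)^3}{(1-4x)^2}$, which splits over half-integer powers of $1-4x$ as
$$ B(x) \;=\; \frac{1-3x}{2(1-4x)^{2}} - \frac{4x^2-5x+1}{2(1-4x)^{5/2}}, $$
whose coefficients are indeed $1,11,81,500,2794,\dots$ starting at $x^3$. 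Be aware that the formula as displayed in the corollary itself contains a misprint (as printed it has constant term $-2$), so in your plan to ``collect terms to yield exactly the claimed formula'' you should target the expression above, and run your numerical check against it rather than against the printed right-hand side.
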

  \begin{proof}
    Let $B(x)$ be the generating function for the numbers $b_n$, let
    $A(x)$ be the generating function for the number of $213$ patterns,
    and let $j_n$ be the number of $12$ patterns with corresponding
    generating function $J(x)$. 
    We know from Lemma \ref{linsys} that 
    $$ 4A(x) + 2B(x) = \sum_{n \geq 0} (n-2) j_n x^n = (J(x)/x^2)' x^3
    .$$ 
    Solving this for $B(x)$ using elementary algebra and a bit of
    calculus yields
    $$ \begin{aligned}
      B(x) &=  \frac{x^2C(x)^3}{(1 - 2xC(x))(1-4x)^{3/2}} \\ 
      & = \frac{3x-1}{(1-4x)^{3/2}} - \frac{4x^2 - 5x + 1}{(1-4x)^{5/2}}
    \end{aligned}.$$
  \end{proof}

  The connection between the classes $\Av(123)$ and $\Av(132)$ is now
  immediate. 

  \begin{corollary} \label{bridge}
    Theorem \ref{231pats} together with Theorem \ref{bonasthm}
    imply immediately that the  total number of $231$ patterns in
    $\Av_n (123)$ is equal to the total number of $231$ patterns in
    $\Av_n (132)$.
  \end{corollary}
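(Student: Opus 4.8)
The plan is to prove the corollary by a direct comparison of generating functions; this is all that is required once Corollary~\ref{231pats} and Theorem~\ref{bonasthm} are in hand. By Theorem~\ref{bonasthm}, the number of $231$ patterns in $\Avnn$ is the coefficient sequence of
$$ \frac{x^2C(x)^3}{(1-2xC(x))(1-4x)^{3/2}}, $$
while the computation carried out in the proof of Corollary~\ref{231pats} shows that the number $b_n$ of $231$ patterns in $\Avn$ is the coefficient sequence of
$$ B(x) = \frac{x^2C(x)^3}{(1-2xC(x))(1-4x)^{3/2}}. $$
These two formal power series are literally identical, so their coefficients agree term by term, and hence $\num_{231}(\Avn) = \num_{231}(\Avnn)$ for every $n$, which is exactly the assertion. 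As a sanity check, both sequences begin $1, 11, 81, 500, 2794, \dots$, matching the two tables.

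In this sense there is really no obstacle: all of the difficulty has already been absorbed into establishing the two generating functions — one via B\'ona's pattern-to-pattern bijection on $\Avnn$, the other via Lemma~\ref{biglemma}, Theorem~\ref{213pats}, and the linear relations of Lemma~\ref{linsys}. The only point to be careful about is simply recording that the two closed forms coincide, rather than re-deriving either of them; if one prefers, one can also verify the identity directly from the rational-in-$C(x)$ expressions using $1 - 2xC(x) = (1-4x)^{1/2}$, which reduces both to $x^2C(x)^3/(1-4x)^2$.

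What would genuinely be harder — and what the numerical coincidence invites — is a \emph{combinatorial} explanation of the equality: a map, in the spirit of B\'ona's argument, from the multiset of $231$-occurrences in $\Avn$ to the multiset of $231$-occurrences in $\Avnn$. Since $\num_{231}$ and $\num_{312}$ are equidistributed within $\Avn$ but not within $\Avnn$, such a correspondence cannot be expected to respect the ambient permutations in any naive way, so I would not attempt it here; the generating-function identity already yields the stated corollary, and a bijective refinement is best left as an open question.
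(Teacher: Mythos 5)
Your proposal is correct and is exactly the argument the paper intends: the generating function $\frac{x^2C(x)^3}{(1-2xC(x))(1-4x)^{3/2}}$ obtained in the proof of Corollary~\ref{231pats} for $\num_{231}(\Avn)$ is literally the same series as the one in Theorem~\ref{bonasthm} for $\num_{231}(\Avnn)$, so the coefficients agree for every $n$. Your added observation that $1-2xC(x)=\sqrt{1-4x}$ reduces both to $x^2C(x)^3/(1-4x)^2$, and your remark that a combinatorial explanation remains open, are consistent with the paper's presentation.
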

  
  We can similarly apply Lemma \ref{linsys} to $321$ patterns.
  \begin{corollary}
    Let $d_n = f_{321}(\Avn)$. Then we have that 
    $$ 
      \sum_{n\geq 0} d_n x^n = 
      \frac{ 8x^3 - 20x^2 + 8x - 1}{(1-4x)^{2}} 
      - \frac{36x^3 - 34x^2 + 10x - 1}{(1-4x)^{5/2}}.
    $$ 
  \end{corollary}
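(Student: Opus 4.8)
The plan is to use the linear system from Lemma \ref{linsys} to solve for $d_n$ in terms of $a_n$ and $\num_{12}(\C_n)$, then translate that relation into generating functions. From the two equations $2a_n + 2b_n + d_n = \binom{n}{3}c_n$ and $4a_n + 2b_n = 4^{n-1} - \binom{2n-1}{n}$, I would subtract the second from the first to eliminate $b_n$, obtaining $d_n = \binom{n}{3}c_n - \left(4^{n-1} - \binom{2n-1}{n}\right) + 2a_n$. Equivalently, at the level of generating functions, $D(x) = \sum_n \binom{n}{3}c_n x^n - J(x) + 2A(x)$, where $J(x) = x^2C(x)^2/(1-4x)$ is the generating function for $\num_{12}$ from Theorem \ref{inv}, and $A(x) = x^3C(x)^3/(1-4x)^{3/2}$ from Theorem \ref{213pats}.

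First I would compute the generating function $\sum_n \binom{n}{3}c_n x^n$. Since $\binom{n}{3} = \frac{1}{6}n(n-1)(n-2)$, this is $\frac{1}{6}\left(x^3 C(x)\right)'''$ up to bookkeeping—more precisely, one wants $\frac{1}{6}x^3 \frac{d^3}{dx^3}\left(\sum_n c_n x^n\right)$ after reindexing, i.e.\ a third-derivative-and-multiply operation applied to $C(x)$. Using $C(x) = (1-\sqrt{1-4x})/(2x)$ and the identity $C(x)^2 = 1/(1-xC(x))^2$ from the Fact, each differentiation introduces a factor with a higher power of $(1-4x)^{-1/2}$ in the denominator, so the result will be a rational combination of $1$, $(1-4x)^{-1/2}$, $(1-4x)^{-1}$, $(1-4x)^{-3/2}$, $(1-4x)^{-2}$, and $(1-4x)^{-5/2}$. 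I would then collect everything over the common structure and verify, by partial-fraction-style matching of the coefficients of $(1-4x)^{-k/2}$ for each $k$, that $D(x)$ collapses to the claimed closed form
$$\frac{8x^3 - 20x^2 + 8x - 1}{(1-4x)^2} - \frac{36x^3 - 34x^2 + 10x - 1}{(1-4x)^{5/2}}.$$

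The main obstacle is purely computational: keeping the algebra for $\sum_n \binom{n}{3}c_n x^n$ under control, since a triple derivative of $C(x)$ generates many terms, and then confirming that the $(1-4x)^{-2}$ and $(1-4x)^{-5/2}$ parts of $-J(x)$ and $2A(x)$ combine with it to give exactly the stated numerators with no leftover half-integer or integer powers of the wrong order. A useful sanity check along the way is to confirm the low-order coefficients against the table: the series should begin $x^3 + 16x^4 + 144x^5 + 1016x^6 + 6271x^7 + \cdots$, matching the $\num_{321}$ column for $\Avn$. Once the generating-function identity $D(x) = \sum_n\binom{n}{3}c_n x^n - J(x) + 2A(x)$ is established and both auxiliary generating functions are substituted, simplification to the displayed form is a mechanical (if tedious) exercise in rational function arithmetic, so I would present the derivation of the linear relation in detail and then simply state that routine simplification yields the result.
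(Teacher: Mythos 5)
Your strategy---eliminate $b_n$ from the two linear relations, substitute the generating functions for $\num_{12}$ and for the $213$ count, and simplify---is exactly the route the paper intends when it says the corollary follows by applying Lemma \ref{linsys}. The genuine problem is the relation you eliminate with. You take the second equation of Lemma \ref{linsys} literally, $4a_n + 2b_n = 4^{n-1} - \binom{2n-1}{n}$, but that display is a misprint: Proposition \ref{relation2} (and the way it is actually used in the proof of Corollary \ref{231pats}, where the right-hand side appears as $\sum_n (n-2)j_n x^n = x^3\,(J(x)/x^2)'$) gives $4a_n + 2b_n = (n-2)\num_{12}(\Avn)$. Without the factor $(n-2)$ your key identity $d_n = \binom{n}{3}c_n - \bigl(4^{n-1}-\binom{2n-1}{n}\bigr) + 2a_n$ is false: at $n=4$ it yields $56 - 29 + 18 = 45$ rather than $d_4 = 16$, so the generating-function identity $D(x) = \sum_n \binom{n}{3}c_n x^n - J(x) + 2A(x)$ cannot simplify to anything correct. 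The repaired starting point is $D(x) = \sum_n \binom{n}{3}c_n x^n - \bigl(xJ'(x) - 2J(x)\bigr) + 2A(x)$, which does check against the table ($56 - 58 + 18 = 16$, $420 - 390 + 114 = 144$, and so on).

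A second caution concerns your final step of ``matching the claimed closed form.'' Carrying the corrected computation through (your formula $\sum_n \binom{n}{3}c_n x^n = \tfrac{x^3}{6}C'''(x)$ is the right one; the parenthetical $(x^3C(x))'''$ is not) gives
$$ D(x) = \frac{8x^3 - 20x^2 + 8x - 1}{2x\,(1-4x)^{2}} - \frac{36x^3 - 34x^2 + 10x - 1}{2x\,(1-4x)^{5/2}}, $$
whose expansion $x^3 + 16x^4 + 144x^5 + 1016x^6 + \cdots$ agrees with the $\num_{321}$ column of the table, whereas the display in the corollary as printed (lacking the $2x$ in the denominators) expands as $2x^4 + 32x^5 + 288x^6 + \cdots$. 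So the coefficient check you propose is not merely a sanity check; it is what reveals that the target formula itself carries a factor-of-$2x$ misprint, and you should aim your simplification at the corrected expression rather than try to force the algebra onto the printed one.
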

      
  Before analyzing these generating functions, we note also that Lemma
  \ref{linsys} and its indecomposable analogue produce several other
  interesting identities. We summarize some of them here for completeness.

  \begin{corollary}
    The following identities hold.
    $$ \num_{21}(\Avn) = 2\num_{213}(\Avns) $$
    $$ \num_{213}(\Avn) + \num_{231}(\Avn) = \num_{231} (\Av
    _{n-1}^*(123))$$
    $$ C(x) \left(\sum_{n\geq 0} \num_{213}(\Avn) x^n \right) = 
      x C'(x) \left(\sum_{n \geq 0} \num_{12}(\Avn)x^n \right) $$
    $$ \sum_{n\geq 0} \num_{213} (\Av_n^* (132)  x^n) = 
      \sum_{n \geq 0} \big(\num_{132}(\Avns) +
      \num_{231}(\Avns)\big)x^n. $$

  \end{corollary}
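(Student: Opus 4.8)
The strategy is to obtain all four identities from two ingredients, both already essentially present in the paper: the explicit generating functions of Theorems \ref{inv}, \ref{213pats}, \ref{bonasthm} and of Corollary \ref{231pats}, together with an ``indecomposable transfer'' principle in the spirit of the identity $A(x)=C(x)^2A^*(x)$ used to prove Theorem \ref{213pats}. The underlying structural fact is that if $q$ is \emph{skew-indecomposable} --- this includes $q\in\{12,132,213\}$ --- then every occurrence of $q$ in a skew sum $p=\pi_1\ominus\cdots\ominus\pi_\ell$ lies entirely inside a single summand $\pi_i$ (since in a skew sum a later component lies entirely below and to the right of an earlier one, the ``smaller'' entries of a $q$-occurrence cannot sit in a later component than the ``larger'' ones). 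Marking that summand and inserting an arbitrary, possibly empty, sequence of indecomposable components on either side (each such sequence contributes $C(x)$, since $1/(1-C^*(x))=C(x)$) gives
$$ \sum_{n\geq 0}\num_q(\Avn)\,x^n = C(x)^2 \sum_{n\geq 0}\num_q(\Avns)\,x^n . $$
Specializing $q=12$ and using Theorem \ref{inv} yields $\sum_{n\geq 0}\num_{12}(\Avns)x^n = x^2/(1-4x)$; specializing $q=213$ recovers exactly the relation between Lemma \ref{biglemma} and Theorem \ref{213pats}. I also record that transpose preserves skew-(in)decomposability, so $\Avns$ is closed under inversion and $\num_{132}(\Avns)=\num_{213}(\Avns)$.

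Two of the identities are now quick. The third is purely formal: by Theorems \ref{inv} and \ref{213pats} its two sides are $C(x)\cdot x^3C(x)^3/(1-4x)^{3/2}$ and $xC'(x)\cdot x^2C(x)^2/(1-4x)$, and these coincide exactly because differentiating $C=xC^2+1$ gives $C'=C^2/\sqrt{1-4x}$. For the first identity I would note that the total number of pairs of entries over $\Avns$ is $\binom n2 c_{n-1}$, so $\num_{21}(\Avns)=\binom n2 c_{n-1}-\num_{12}(\Avns)$; combining the generating function just found for $\num_{12}(\Avns)$ with $A^*(x)$ from Lemma \ref{biglemma}, the claim reduces to the single power-series identity $\tfrac{x^2}{2}\,(xC(x))'' - \tfrac{x^2}{1-4x} = \tfrac{2x^3C(x)}{(1-4x)^{3/2}}$, which I verify by repeated substitution of $C=xC^2+1$. (The left side of the printed identity should read $\num_{21}(\Avns)$, not $\num_{21}(\Avn)$.)

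The remaining two identities hinge on $\num_{231}(\Avns)$, the one quantity the simple transfer does not reach, because $231=12\ominus 1$ is skew-decomposable. Here one analyses how a $231$ pattern can sit inside $p=\pi_1\ominus\cdots\ominus\pi_\ell$: its first two entries (the ``$23$'') must lie in a common component $\pi_i$, while its last entry (the ``$1$'') is either an entry of $\pi_i$ completing a $231$ with them, or any entry at all of a strictly later component. This gives
$$ \num_{231}(p)=\sum_i \num_{231}(\pi_i) + \sum_{i<j}\num_{12}(\pi_i)\,|\pi_j| . $$
Summing over $\Avn$, with free component-sequences inserted in the three gaps and the target component weighted by its size (generating function $\sum_m m c_{m-1}x^m = x(xC(x))'$), expresses $\sum\num_{231}(\Avn)x^n$ through $B^*(x):=\sum\num_{231}(\Avns)x^n$, $x^2/(1-4x)$, and $x(xC(x))'$. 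Dividing out $C(x)^2$ and simplifying with the Catalan identities collapses this to the compact formula $B^*(x) = 2x^4C(x)^2/(1-4x)^2$. The second identity then follows by checking $x\,(A(x)+B(x)) = B^*(x)$ with $A,B$ as in Theorem \ref{213pats} and Corollary \ref{231pats} (so the printed $\Av^*_{n-1}(123)$ should read $\Av^*_{n+1}(123)$); the fourth follows because $213$ is skew-indecomposable, so by the transfer principle together with Theorem \ref{bonasthm} and Corollary \ref{bridge} the generating function for $\num_{213}(\Av^*_n(132))$ equals $x^3C(x)/(1-4x)^2$, and one checks $x^3C(x)/(1-4x)^2 = A^*(x)+B^*(x)$ --- which, using $1-\sqrt{1-4x}=2xC(x)$, is precisely the formula for $B^*(x)$ just established.

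The main obstacle is the structural bookkeeping behind the $231$ relation: correctly classifying how a $231$ pattern straddles the skew components, and translating the ``$12$ here, lone entry somewhere later'' contribution into generating functions with the right power of $C(x)$ (three free component-blocks) and the correct size-weighting of the target block. Once that relation is in hand, everything reduces to verifying a handful of rational-in-$C$ identities, which is routine given $C=xC^2+1$ and $C'=C^2/\sqrt{1-4x}$.
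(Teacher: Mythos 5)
Your proposal is correct, and it actually supplies the argument the paper only gestures at: the corollary is stated with no proof, as a consequence of Lemma \ref{linsys} ``and its indecomposable analogue,'' and your transfer principle $\sum_n \num_q(\Avn)x^n = C(x)^2\sum_n\num_q(\Avns)x^n$ for skew-indecomposable $q$, together with the skew-sum analysis of $231$ (equivalently $B(x)=C(x)^2B^*(x)+A(x)$, whence $B^*(x)=2x^4C(x)^2/(1-4x)^2$), is exactly the missing machinery; the four generating-function verifications you describe all check out, e.g.\ $C'=C^2/\sqrt{1-4x}$, $\tfrac{x^2}{2}(xC)''=x^2/(1-4x)^{3/2}$, and $\sqrt{1-4x}+2xC=1$ collapsing $A^*+B^*$ to $x^3C/(1-4x)^2$. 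Two further points are worth recording. First, your emendations of the statement are right: as printed the first identity already fails at $n=3$ (there $\num_{21}(\Av_3(123))=9$ while $2\num_{213}(\Av_3^*(123))=2$), and with the star on the left it is literally the second displayed form of $A^*$ in Lemma \ref{biglemma} combined with $\sum_n\num_{12}(\Avns)x^n=x^2/(1-4x)$; likewise the second identity needs $\Av_{n+1}^*(123)$ rather than $\Av_{n-1}^*(123)$ (for $n=4$, $9+11=20=\num_{231}(\Av_5^*(123))$). Second, a caution: your check $x\bigl(A(x)+B(x)\bigr)=B^*(x)$ uses $B(x)=x^3C(x)^3/(1-4x)^2$, which is what the table of values forces, whereas the formula printed in Corollary \ref{231pats} (and the quoted form of Theorem \ref{bonasthm}) is off by a factor of $x$; taken literally with the printed $B$ the verification would fail, so you are silently correcting that misprint as well, and this should be said explicitly. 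With those caveats the proposal is a complete and correct proof, obtained by the algebraic route the paper evidently intends, plus the one genuinely new structural ingredient (the decomposition $\num_{231}(p)=\sum_i\num_{231}(\pi_i)+\sum_{i<j}\num_{12}(\pi_i)|\pi_j|$ across skew components) that the paper never writes down.
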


  Note that each of these identities are equivalent. That is, combined
  with Lemma \ref{linsys}, a combinatorial proof of any of them would
  imply all of the others (including Lemma \ref{biglemma}). 

  Now we can do some analysis of the main sequences. Using some
  standard generating function analysis \cite{flajolet}, we find
  that the asymptotic growth of the number of length $3$ patterns are
  as follows:

  \vspace{1pc}
  
  $$ \num_{213} (\Avn) \sim \sqrt{\frac{n}{\pi}} 4^n$$
  $$ \num_{231} (\Avn) \sim \frac{n}{2} 4^n $$
  $$ \num_{321} (\Avn) \sim \frac{8}{3} \sqrt{\frac{n^3}{\pi}} 4^n. $$

  We see that the three sequences each differ by a factor of
  approximately $\sqrt{n}$. Surprisingly, this is the same factor that
  the sequences $\num_{123}, \num_{231}, \num_{321}$ differ by in the
  class $\Av (132)$, as seen in \cite{bona10}.

  Each of these generating functions are simple enough that exact
  formulas can be obtained with relatively little hassle. One could
  argue that the asymptotic values are more interesting and
  provide more insight than the complicated formulas, but we present
  them here for completeness.

  \begin{corollary}
    Let $a_n = \num_{132}(\Avn)$, $b_n = \num_{213}(\Avn)$, and $d_n =
    \num_{321}(\Avn)$. Then we have that 
    $$ a_n = \frac{n+2}{4} \binom{2n}{n} - 3 \cdot 2^{2n-3} $$
    $$ b_n = (2n-1) \binom{2n-3}{n-2} - (2n+1)\binom{2n-1}{n-1} + 
       (n+4) \cdot 2^{2n-3}$$
    $$ \begin{aligned} d_n 
        = \frac{1}{6} \binom{2n+5}{n+1} \binom{n+4}{2} 
        &- \frac{5}{3} \binom{2n+3}{n} \binom{n+3}{2} 
        + \frac{17}{3} \binom{2n+1}{n-1} \binom{n+2}{2} \\
        &- 6\binom{2n-1}{n-2} \binom{n+1}{2} - (n+1) \cdot 4^{n-1}.
      \end{aligned}
    .$$
  \end{corollary}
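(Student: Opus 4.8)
\section*{Proof proposal}

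The plan is to obtain all three formulas by straightforward coefficient extraction from the generating functions already in hand: Theorem~\ref{213pats} supplies the generating function for $a_n = \num_{132}(\Avn) = \num_{213}(\Avn)$, Corollary~\ref{231pats} supplies it for $b_n = \num_{231}(\Avn) = \num_{312}(\Avn)$, and the corollary immediately preceding supplies it for $d_n = \num_{321}(\Avn)$. In each case the excerpt has already reduced the generating function to a $\mathbb{Z}[x]$-linear combination of $(1-4x)^{-1}$, $(1-4x)^{-3/2}$, $(1-4x)^{-2}$, and $(1-4x)^{-5/2}$, so no further manipulation of the rational forms is required; only termwise coefficient extraction, followed by simplification, is needed.

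First I would record the elementary expansions $[x^n](1-4x)^{-1} = 4^n$ and $[x^n](1-4x)^{-1/2} = \binom{2n}{n}$, and, by differentiating, $[x^n](1-4x)^{-3/2} = (2n+1)\binom{2n}{n} = (n+1)\binom{2n+1}{n}$, $[x^n](1-4x)^{-2} = (n+1)4^n$, and $[x^n](1-4x)^{-5/2} = \tfrac13(2n+1)(2n+3)\binom{2n}{n}$; multiplication by $x^j$ merely shifts the index. Applying this to Theorem~\ref{213pats}, the two resulting terms collect, via the identities above and $\binom{2n-1}{n-1} = \tfrac12\binom{2n}{n}$, directly into $a_n = \tfrac{n+2}{4}\binom{2n}{n} - 3\cdot 2^{2n-3}$. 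The same routine applied to Corollary~\ref{231pats}, now with $x$- and $x^2$-shifts of $(1-4x)^{-3/2}$ and $(1-4x)^{-5/2}$, yields the stated formula for $b_n$ once the shifted central binomials $\binom{2n-2}{n-1}$ and $\binom{2n-4}{n-2}$ are re-expressed in terms of $\binom{2n-1}{n-1}$ and $\binom{2n-3}{n-2}$ using the absorption identity.

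The real work is the $d_n$ formula, whose generating function has cubic numerators over $(1-4x)^2$ and $(1-4x)^{5/2}$. Coefficient extraction produces a $4^n$-type term together with four shifted evaluations of $\tfrac13(2n+1)(2n+3)\binom{2n}{n}$ at arguments $n, n-1, n-2, n-3$, and the remaining task is to reindex these so that the four central binomials become $\binom{2n+5}{n+1}$, $\binom{2n+3}{n}$, $\binom{2n+1}{n-1}$, $\binom{2n-1}{n-2}$ and the accompanying quadratic coefficients collapse into $\binom{n+4}{2}$, $\binom{n+3}{2}$, $\binom{n+2}{2}$, $\binom{n+1}{2}$; this is accomplished by repeated use of Pascal's rule together with the absorption identity. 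This collecting step is the main obstacle, but it is purely a matter of careful bookkeeping rather than any new idea, and the final closed form can be checked independently by confirming that it reproduces the partial-fraction coefficients of the generating function, or simply by matching the initial terms against the table of values for $\Avn$.
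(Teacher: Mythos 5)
Your overall plan is the same one the paper has in mind: the paper offers no details beyond saying the exact formulas ``can be obtained with relatively little hassle'' from the generating functions, and termwise extraction using $[x^n](1-4x)^{-1}=4^n$, $[x^n](1-4x)^{-3/2}=(2n+1)\binom{2n}{n}$, $[x^n](1-4x)^{-2}=(n+1)4^n$, $[x^n](1-4x)^{-5/2}=\tfrac13(2n+1)(2n+3)\binom{2n}{n}$ is exactly the intended argument. For $a_n$ your computation goes through verbatim from Theorem~\ref{213pats} and gives $\tfrac{n+2}{4}\binom{2n}{n}-3\cdot 2^{2n-3}$. (You also correctly read $b_n$ as $\num_{231}(\Avn)$; the ``$\num_{213}$'' in the statement is a typo.)

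The concrete snag is in the step where you say the same routine applied to the displayed generating functions for $b_n$ and $d_n$ ``yields the stated formulas'': those two displays are misprinted, so a literal run of your plan fails there. The expression in Corollary~\ref{231pats} has constant term $-1-1=-2\neq b_0$, and its coefficients come out as $-2,-10,-48,-222,\dots$, not $0,0,0,1,11,81,\dots$; likewise the displayed generating function for $d_n$ expands to $0,0,0,0,2,32,288,\dots$, i.e.\ its $x^n$ coefficient is $2d_{n-1}$, not $d_n$. The correct reduced form for $b_n$ (obtained from the proof of that corollary after noting $1-2xC(x)=\sqrt{1-4x}$ and fixing the power of $x$ there) is
$$ B(x)=\frac{x^{3}C(x)^{3}}{(1-4x)^{2}}
      =\frac{1-3x}{2(1-4x)^{2}}-\frac{4x^{2}-5x+1}{2(1-4x)^{5/2}}, $$
and the analogous recomputation is needed for $d_n$; equivalently, and more robustly, one should regenerate both series directly from Lemma~\ref{linsys} together with Theorem~\ref{213pats} and Theorem~\ref{inv} (note the second equation of Lemma~\ref{linsys} should read $4a_n+2b_n=(n-2)\bigl(4^{n-1}-\binom{2n-1}{n}\bigr)$, as in Proposition~\ref{relation2}) and then extract coefficients. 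Doing so does produce exactly the stated closed forms for $b_n$ and $d_n$, so your method is sound; but the verification against the table of initial values that you list as an optional afterthought is in fact the step that catches these misprints, and it should be built into the argument rather than skipped.
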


  \subsection{Larger Patterns}
  
  Some of these same techniques are applicable to larger patterns.
  For example, we can easily modify Lemma \ref{linsys} to for patterns
  of all sizes. This leads to increasingy complicated expressions,
  but this simple idea can be used to prove the following proposition.

  \begin{proposition}
    Let $k, l \in \mathbb{Z}^+$, and $q$ be any permutation in $S_k$
    other than the decreasing permutation. Then for $n$ large enough,
    we have that 
    $$\num_{k \ldots 3 2 1}(\Avn) > \num_{q}(\Avn).$$
  \end{proposition}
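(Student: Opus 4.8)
The parameter $l$ in the statement plays no role, so the plan is to show: for every $k \geq 2$ (the case $k=1$ is vacuous, since $S_1$ has only the decreasing permutation) and every $q \in S_k$ other than $\delta_k := k(k-1)\cdots 21$, one has $\num_{\delta_k}(\Avn) > \num_q(\Avn)$ once $n$ is large. First I would dispose of the case where $q$ contains $123$: then every permutation of $\Avn$ avoids $q$, so $\num_q(\Avn)=0$, while $\num_{\delta_k}(\Avn) \geq \binom nk > 0$ once $n \geq k$ because $\delta_n \in \Avn$ contains $\delta_k$. For the remaining $q$ (those avoiding $123$) I would aim for the bound $\num_q(\Avn) \leq \num_{12}(\Avn)\binom{n-2}{k-2}$, and pair it with a matching lower bound for $\num_{\delta_k}(\Avn)$ coming from the \emph{same} computation.

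Both bounds rest on the two ``global'' counting identities that are the length-$k$ analogues of Lemma \ref{linsys}. The first is that summing over all $k$-subsets of positions gives $\sum_{\tau \in S_k} \num_\tau(\Avn) = \binom nk c_n$, and since $\num_\tau(\Avn)=0$ whenever $\tau$ contains $123$, only $123$-avoiding $\tau$ contribute. The second generalizes Proposition \ref{relation2}: marking an occurrence of $12$ in $p \in \Avn$ and then choosing $k-2$ of the other $n-2$ entries produces a $k$-subset of positions whose induced pattern $\tau$ has a marked $12$, and counting the same data the other way gives
\begin{align*}
  \num_{12}(\Avn)\binom{n-2}{k-2}
  &= \sum_{\tau \in S_k} \#\{12\text{ in }\tau\}\cdot\num_\tau(\Avn) \\
  &= \sum_{\tau \in \Av_k(123)\setminus\{\delta_k\}} \#\{12\text{ in }\tau\}\cdot\num_\tau(\Avn),
\end{align*}
using once more that $123$-containing patterns contribute $0$ and that $\delta_k$ has no $12$. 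Since every $123$-avoiding $\tau \neq \delta_k$ has an ascent and hence at least one $12$, the right-hand sum is $\geq \num_q(\Avn)$ (keep only the term $\tau = q$) and also $\geq \sum_{\tau \neq \delta_k}\num_\tau(\Avn) = \binom nk c_n - \num_{\delta_k}(\Avn)$ (by the first identity). Hence, simultaneously,
$$ \num_q(\Avn) \leq \num_{12}(\Avn)\binom{n-2}{k-2} \qquad\text{and}\qquad \num_{\delta_k}(\Avn) \geq \binom nk c_n - \num_{12}(\Avn)\binom{n-2}{k-2}. $$

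Finally I would reduce everything to the estimate $\binom nk c_n > 2\,\num_{12}(\Avn)\binom{n-2}{k-2}$ for all large $n$: granting it, the two displayed inequalities give $\num_{\delta_k}(\Avn) > \num_{12}(\Avn)\binom{n-2}{k-2} \geq \num_q(\Avn)$, as wanted. For the estimate, Theorem \ref{inv} gives $\num_{12}(\Avn) = 4^{n-1} - \binom{2n-1}{n} \leq 4^{n-1}$, so the right-hand side is $O(n^{k-2}4^n)$, whereas Stirling's approximation yields $\binom nk c_n \sim \frac{1}{k!\sqrt{\pi}}\,n^{k-3/2}4^n$, which wins because $k-\tfrac32 > k-2$. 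The only genuinely nonroutine step is spotting the right second identity: the payoff is that the single quantity $\num_{12}(\Avn)\binom{n-2}{k-2}$ controls both $\num_q(\Avn)$ and $\binom nk c_n - \num_{\delta_k}(\Avn)$ from above at once, so that no information about any particular non-decreasing pattern is ever needed; what remains is bookkeeping and a crude asymptotic comparison.
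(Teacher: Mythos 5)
Your proposal is correct and is essentially the paper's own argument: both rest on the two length-$k$ counting identities (the analogues of Fact \ref{relation1} and Proposition \ref{relation2}) together with an asymptotic comparison of $\binom{n}{k}c_n \sim \frac{n^{k-3/2}}{k!\sqrt{\pi}}4^n$ against $\num_{12}(\Avn)$ times a polynomial factor in $n$. If anything, yours is the more careful rendering: the paper states the second identity with coefficient $(n-k+1)$ where $\binom{n-2}{k-2}$ is meant, and in its displayed identity the coefficient $e_i$ attached to a pattern with exactly one noninversion (such as $231$ when $k=3$) is actually zero, a small gap that your two-sided use of the single quantity $\binom{n-2}{k-2}\num_{12}(\Avn)$, together with the separate treatment of patterns containing $123$, closes cleanly.
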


  \begin{proof}
    Let $T$ be the set of permutation in $S_k$ which are not the
    descending permutation.  As in Fact \ref{relation1} and
    Proposition \ref{relation2},  we can express the number
    $(n-k+1)\num_{12}(\Avn)$ as a positive linear combination of
    all of $\num_{q} (\Avn)$ where $q \in T$, and we can express
    $\binom{n}{k} c_n$ as the sum of all $\num_r (\Avn)$ where $r
    \in S_n$. 
    It follows that there is a positive integer $m$ and positive
    integers $e_i$ such that
    $$ \binom{n}{k} c_n - (n-m+1) \num_{12} (\Avn) = 
      \num_{k \ldots 321} - \sum_{q \in T} e_i \num_q (\Avn).$$
    Asymptotic analysis shows that the left hand side is eventually
    positive, and so the first term on the right side eventually
    outgrows the second term, which completes the proof.
  \end{proof}

\section{Further Directions}

  The numbers $\num_q (\Av (p))$ for permutations $p,q$ exhibit numerous
  symmetries and produce many new questions. All of the generating
  presented here and in \cite{bona10} are almost rational, in the
  sense that they lie in the ring $\mathbb{Q}(x,\sqrt{1-4x})$. This
  allows for easy asymptotic analysis, and leaves open the possibility
  of bijections to other Catalan-related objects. 

  Building on what was mentioned in \cite{bona12}, we have instances
  of the same sequence of numbers which correspond sums of  statistics
  with different distributions in objects counted by the Catalan
  numbers.  Do these sequences and statistics have anologues in other
  such objects?

  Thus far, to the author's knowledge, the expectation of patterns
  has only been studied for the classes $\Av (123)$ and $\Av (132)$
  (and their symmetries). Applying these ideas to more general classes could
  yield similarly interesting identities. Note that the increasing and
  decreasing permutation do not always provide the opposite extreme
  cases: it is simple to show that $\num_{123}(\Av (2413)) =
  \num_{321}(\Av(2413))$. This leads to the natural question: in the
  set of permutations avoiding a specific pattern (or a set of
  patterns), can we easily determine what pattern is most common? And
  how large can the difference be between the most and least common?
  
  Finally, are there other occurences of the same sequence of patterns
  arising in different classes? Or within the same class, as in
  $\Avnn$? Taking this to the extreme, is there a proper, non-trivial
  permutation class for which each pattern of a given length is
  equally as common, as in the class of all permutations?

\nocite{*}
\bibliographystyle{plain}
\bibliography{expat}

\end{document}